\theoremstyle{plain}
\newtheorem{proposition}{Proposition}[section]
\newtheorem{theorem}[proposition]{Theorem}
\newtheorem{lemma}[proposition]{Lemma}
\newtheorem{corollary}[proposition]{Corollary}
\theoremstyle{definition}
\theoremstyle{remark}
\newtheorem{remark}[proposition]{Remark}
\renewenvironment{proof}{\smallskip\noindent\emph{\textbf{Proof.}}\hspace{1pt}}%
{\hspace{-5pt}{\nobreak\quad\nobreak\hfill\nobreak$\square$\vspace{8pt}%
\par}\smallskip\goodbreak}
\newenvironment{proofof}[1]{\smallskip\noindent\emph{\textbf{Proof of #1.}}%
\hspace{1pt}}{\hspace{-5pt}{\nobreak\quad\nobreak\hfill\nobreak%
$\square$\vspace{8pt}\par}\smallskip\goodbreak}
\renewcommand{\leq}{\leqslant}
\renewcommand{\geq}{\geqslant}
\newcommand{\Div}{{\mathrm{Div}}}
\newcommand{\C}[1]{\mathscr{C}^{#1}}
\newcommand{\Cc}[1]{\mathscr{C}_c^{#1}}
\newcommand{\modulo}[1]{{\left|#1\right|}}
\newcommand{\norma}[1]{{\left\|#1\right\|}}
\newcommand{\Ref}[1]{{\rm(\ref{#1})}}
\newcommand{\reali}{{\mathbb{R}}}
\newcommand{\rpic}{{\mathbb{R}_+}}
\newcommand{\rpis}{{\mathbb{R}_+^*}}
\newcommand{\BV}{\mathbf{BV}}
\renewcommand{\epsilon}{\varepsilon}
\renewcommand{\phi}{\varphi}
\renewcommand{\L}[1]{{\mathbf{L}^#1}}
\newcommand{\W}[2]{{\mathbf{W}^{#1,#2}}}
\newcommand{\Lloc}[1]{{\mathbf{L}_{loc}^{#1}}}
\newcommand{\tv}{\mathinner{\rm TV}}
\renewcommand{\div}{\mathinner{\rm div}}
\newcommand{\spt}{\mathop{\rm Supp}}
\newcommand{\pt}{\partial}
\newcommand{\Lip}{\mathinner\mathbf{Lip}}
\renewcommand{\d}[1]{\mathinner{\mathrm{d}{#1}}}
\newcommand{\pd}{{\eta}}
\newcommand{\gd}{{\lambda}}
\title{Improved stability estimates on general scalar balance laws}
\author{Magali L\'ecureux-Mercier$^\textrm{a}$}
\begin{document} 
\maketitle

\footnotetext[1]{Laboratoire MAPMO, 
Universit\'e d'Orl\'eans, UFR Sciences, 
B\^atiment de math\'ematiques - Rue de Chartres, 
B.P. 6759 - 45067 Orl\'eans cedex 2
France}
\begin{abstract}
  \noindent Consider the general scalar balance law $\partial_t u +
  \Div f(t, x,u) = F(t,x,u)$ in several space dimensions. The aim of
  this note is to improve the results of  Colombo,  Mercier,  Rosini who gave an estimate of the 
dependence of the solutions  from the  flow $f$ and from the source $F$. The improvements are twofold: 
first the  expression of the coefficients in these estimates  are more precise; second,  we eliminate some 
regularity hypotheses thus extending significantly the applicability of our estimates.

  \smallskip

  \noindent\textit{2000~Mathematics Subject Classification:} 35L65.

  \smallskip

  \noindent\textit{Keywords:} Multi-dimensional scalar conservation
  laws, Kru\v{z}kov entropy solutions, $\BV$ estimate.

\end{abstract}

\section{Introduction}

We consider the Cauchy problem for the general scalar balance law
\begin{equation}
  \label{eq:probv}
  \left\{\begin{array}{l@{\qquad}rcl}
      \partial_t u + \Div  f(t,x,u) = F(t,x,u)
      & (t,x) & \in & \rpis\times\reali^N
      \\
      u(0,x)=u_0(x) & x & \in & \reali^N\,.
      \\
    \end{array}
  \right.
\end{equation}
This kind of equation has already been intensively studied: a fundamental result is the one of S. N.  Kru\v zkov \cite[Theorem~1 \& 5]{Kruzkov}, stating the existence and uniqueness of a weak entropy solution for an initial data $u_0\in \L\infty(\reali^N,\reali)$. In addition, Kru\v zkov describes the dependence of the solutions with respect to the initial condition: if $u_0$ and $v_0$ are two initial data, then the associated entropy solutions $u$ and $v$ satisfy
\begin{equation}\label{eq:kru}
 \norma{(u-v)(t)}_{\L1}\leq e^{\gamma t}\norma{u_0-v_0}_{\L1}\,, \qquad\qquad \textrm{with } \gamma=\norma{\pt_u F}_{\L\infty}\,.
\end{equation}
A huge literature on this subject is available in the special case the flow $f$ depends only on $u$ and not on the variables $t$ and $x$  and there is no source $F=0$  (see for example \cite{Bressan, DafermosBook, Sr1, Sr2}).

We are interested here in the dependence of the solution with respect to flow $f$ and source $F$ in the case these functions depend on the three variables $t$, $x$ and $u$.

This dependence with respect to flow and source has  already been investigated: 
this question was first addressed from the point of view of numerical analysis by B. Lucier \cite{lucier} who studied the case of an homogeneous flow ($f(u)$), without source term ($F=0$). 
More recently F. Bouchut \& B. Perthame \cite{bouchutperthame} improved this result, always in the case of an homogeneous flow  and without source. 
G.-Q. Chen \& K.  Karlsen \cite{chenkarlsen} also studied this dependence, for a flow depending also on $x$, 
but the estimate they obtained was depending on an a priori (unknown) bound on $\tv(u(t))$. 

The purpose of the present paper is to improve the recent result of R. Colombo, M. Mercier \& M. Rosini \cite{ColomboMercierRosini}, which provided an estimate  of the total variation in the general case (with flow and source  depending on the three variables $t$, $x$ and $u$) and of the $\L1$ distance  between solutions. 
In particular, this estimate can be compared to the one of Kru\v zkov (\ref{eq:kru}) which gives a bound  on the $\L1$ distance between solutions with different initial data (but with same flow and source). The estimates (\ref{eq:kru}) and \cite[Theorem 2.6]{ColomboMercierRosini} look similar but in \cite{ColomboMercierRosini}, the coefficient $\gamma$ given by Kru\v zkov in (\ref{eq:kru}) is replaced by $\kappa=2N\norma{\nabla\pt_u f}_{\L\infty}+\norma{\pt_u F}_{\L\infty}$. Consequently,   we do not recover (\ref{eq:kru}) from \cite{ColomboMercierRosini} in the case $F=0$ (because $\gamma=0$ whereas $\kappa= 2N\norma{\nabla\pt_u f}_{\L\infty}\neq 0$  a priori).

In the same setting as in~\cite{ColomboMercierRosini, Kruzkov}, we provide here an
estimate on the total variation of the solution to  (\ref{eq:probv}), and on the dependence of the solutions to~(\ref{eq:probv}) on
the flow $f$ and on the source $F$, with better hypotheses and coefficients than in \cite{ColomboMercierRosini}. 
The advances are twofold. Firstly, we relax hypotheses, and thus widely extend the usability of our results. 
More precisely, we require here less regularity in time than in \cite{ColomboMercierRosini}, which is very useful for applications (see \cite{chm_control, ColomboMercier}). Furthermore, we recover the same estimate as Kru\v zkov when we consider the dependence toward initial conditions only.

This note is organized as follows. In Section \ref{sec:thm} we state the main results and compare them to those in \cite{ColomboMercierRosini}. In Section \ref{sec:prelim}, we give some tools on functions with bounded variations; in Sections \ref{sec:prooftv} and \ref{sec:proofcomparison} we  prove Theorems \ref{teo:tv} and \ref{teo:estimates}; finally Section \ref{sec:tech}
contains some technical lemmas used in the preceding sections.

\section{Main results}\label{sec:thm}

We shall use the notations $\rpic = \left[0, + \infty \right)$ and $\rpis = \left(0,
  +\infty \right)$. Below, $N$ is a positive integer, $\Omega = \rpis
\times \reali^N \times \reali$; for any positive $T$, $U$ we denote   $\Omega_T^U=[0,T]\times\reali^N\times [-U,U]$;  $B(x,r)$ stands for  the ball in
$\reali^N$ with center $x \in \reali^N$ and radius $r > 0$ and $\spt(u)$ stands for  the support of $u$. The volume
of the unit ball $B(0,1)$ is $\omega_N$. For notational simplicity, we
set $\omega_0 = 1$.  The following induction formula gives $\omega_N$ in terms of the Wallis integral $W_N$:
\begin{equation}
  \label{eq:W}
  \frac{\omega_{N}}{\omega_{N-1}}
  =
  2 \, W_N
  \qquad \mbox{ where } \qquad
  W _N
  =
  \int_0^{\pi/2} (\cos \theta)^N \, \mathrm{d}\theta \,.
\end{equation}
In the present work, $\mathbf{1}_A$ is the characteristic function of
the set $A$,  and $\delta_t$ is the Dirac measure centered at
$t$. Besides, for a vector valued function $f = f(x,u)$ with $u =
u(x)$, $\Div f$ stands for the total divergence. On the other hand,
$\div f$, respectively $\nabla f$, denotes the partial divergence,
respectively gradient, with respect to the space variables. Moreover,
$\partial_u$ and $\partial_t$ are the usual partial derivatives. Thus,
$\Div f = \div f + \partial_u f \cdot \nabla u$.

The following sets of assumptions on $f$ and $F$ will be of use below.
\begin{eqnarray}
    \mathbf{(H1^*)}\!\!
    &&
    \left\{
      \begin{array}{l}
        f \in \C0(\Omega; \reali^N)\,,\qquad
        F \in \C0(\Omega; \reali)\,,
        \\[5pt]
        f, F \textrm{ have  continuous derivatives }\pt_u f\,,\; \pt_u\nabla f\,,\; \nabla^2 f\,,\; \pt_u F\,,\; \nabla F\,;\\[5pt]
	\textrm{for all }U, \,T>0, \qquad\pt_u f\in \L\infty(\Omega_T^U  ; \reali^N)\,,\\[5pt]
        F-\div f \in \L\infty(\Omega_T^U  ;\reali)\,,\qquad  \pt_u(F-\div f) \in \L\infty(\Omega_T^U  ;\reali)\,.
      \end{array}
    \right.
    \\
    \mathbf{(H2^*)}\!\!
    &&
    \left\{
      \begin{array}{l}
\textrm{for all }U,\; T>0\,,\qquad\nabla\pt_u f\in \L\infty(\Omega_T^U  ;\reali^{N\times N})\,,\qquad \pt_u F\in \L\infty(\Omega_T^U  ;\reali)\,,\\[5pt]
\displaystyle \int_0^T\!\!\int_{\reali^N}\norma{\nabla(F-\div f)(t,x,\cdot)}_{\L\infty([-U, U];\reali^N)} \d{x}\,\d{t}< \infty\,.
      \end{array}
    \right.
    \\[2pt]
    \mathbf{(H3^*)}\!\!
    &&
    \left\{
      \begin{array}{l}
      \textrm{for all }U,\, T>0 \qquad \pt_u f\in \L\infty(\Omega_T^U  ;\reali^N)\,,\qquad \pt_u F\in \L\infty(\Omega_T^U  ;\reali)\,,\\[5pt]
        \displaystyle
\int_{0}^T \!\! \int_{\reali^N} \!
        \norma{(F- \div f)(t, x, \cdot)}_{\L\infty([-U,U];\reali)} 
        \, \mathrm{d}x\, \mathrm{d}t < + \infty\,.
      \end{array}
    \right.
\end{eqnarray}

Comparing these sets of hypotheses to \textbf{(H1)},\textbf{(H2)} and \textbf{(H3)} in \cite{ColomboMercierRosini}, we note that
\begin{itemize}
\item no derivatives in time are now needed;
\item the $\L\infty$ norm are now taken on the domain $\Omega^U_T=[0,T]\times \reali^N\times [-U,U]$ which is smaller than  $\Omega=\rpic\times\reali^N\times \reali$, which was the domain considered in \cite{ColomboMercierRosini}.
\end{itemize}

Let us recall the fundamental theorem 
\begin{theorem}[Kru\v zkov \cite{Kruzkov}]
  \label{teo:Kruzkov}
  Assume~$\mathbf{(H1^*)}$ hold. Then, for any $u_0 \in \L\infty (\reali^N;
  \reali)$, there exists a unique weak entropy solution $u$
  to~(\ref{eq:probv}) in $\L\infty \left( \rpic; \Lloc{1}(\reali^N;
    \reali) \right)$ continuous from the right. Moreover, if a
  sequence $u_0^n \in \L\infty (\reali^N; \reali)$ converges to $u_0$
  in $\Lloc1$, then for all $t>0$ the corresponding solutions $u^n(t)$
  converge to $u(t)$ in $\Lloc1$.
\end{theorem}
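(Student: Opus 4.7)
The plan is to follow Kruzkov's original strategy: first define weak entropy solutions through the family of entropy inequalities, then establish $\L1$ stability (and hence uniqueness) via the doubling of variables technique, and finally obtain existence through vanishing viscosity approximation. Continuity from the right and continuous dependence on the initial datum will then come almost for free from the stability estimate. The key structural point is that under $\mathbf{(H1^*)}$ the flux $f$ and source $F$ have enough regularity (Lipschitz in $u$, first derivatives in $x$, and $F-\div f$ bounded on cylinders $\Omega_T^U$) to support Kruzkov's classical manipulations while keeping solutions localized in the $u$-variable through a maximum-principle bound $\norma{u(t)}_{\L\infty}\leq \norma{u_0}_{\L\infty}+t\,\norma{F-\div f}_{\L\infty(\Omega_t^{U})}$ for a suitable $U$.

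For the stability/uniqueness part, I would fix two entropy solutions $u,v$ with data $u_0,v_0$ and double the variables: write the entropy inequality for $u(t,x)$ with constant $k=v(s,y)$ and symmetrically for $v(s,y)$ with $k=u(t,x)$. Testing against a product test function $\phi(t,x,s,y)=\psi\bigl(\tfrac{t+s}{2},\tfrac{x+y}{2}\bigr)\rho_\epsilon(t-s)\rho_\epsilon^N(x-y)$ with $\rho_\epsilon$ a standard mollifier, adding the two inequalities and letting $\epsilon\to 0$, the singular terms involving derivatives of $\rho_\epsilon$ cancel thanks to the Lipschitz regularity of $f$ in $u$, and one arrives at a Kato-type inequality
\begin{equation*}
\pt_t \modulo{u-v} + \div\bigl(\sgn(u-v)(f(t,x,u)-f(t,x,v))\bigr)\leq \sgn(u-v)\bigl(F(t,x,u)-F(t,x,v)\bigr) + R,
\end{equation*}
where the remainder $R$ is controlled by $\norma{\pt_u f}_{\L\infty(\Omega_T^U)}$ and $\norma{F-\div f}_{\L\infty(\Omega_T^U)}$. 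Integrating on a suitable truncated cone of dependence $\{(t,x)\,:\,\modulo{x}\leq R-Lt\}$ with $L=\norma{\pt_u f}_{\L\infty}$ and using Gronwall with constant $\gamma=\norma{\pt_u F}_{\L\infty}$ yields (\ref{eq:kru}) locally, hence uniqueness. The doubling argument is unquestionably the main technical obstacle; the delicate point is to justify all the passages to the limit in $\epsilon$ when $u,v$ are only $\L\infty\cap\Lloc1$, which is precisely what forces the cone-of-dependence localization.

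For existence, I would regularize into a parabolic problem $\pt_t u^\e+\Div f(t,x,u^\e)=F(t,x,u^\e)+\e\Delta u^\e$ with mollified data $u_0^\e$. Standard parabolic theory (together with the $\L\infty$ bound above, which is uniform in $\e$) gives a smooth solution $u^\e$. Multiplying by $\eta'(u^\e-k)$ for a smooth convex entropy $\eta$ and integrating produces the Kruzkov inequality up to a nonnegative correction $\e\,\eta''(u^\e-k)\modulo{\nabla u^\e}^2$, which is discarded. Compactness is obtained on balls: from the $\L\infty$ bound combined with an $\Lloc1$ contraction between $u^\e$ and its translates (derived by applying the same doubling argument to the regularized problem), a version of Kolmogorov's compactness theorem gives strong $\Lloc1$ convergence of a subsequence $u^\e\to u$. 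Passing to the limit in the entropy inequalities (convexity of $\eta$ and weak convergence of the dissipation term both give the correct sign) produces a weak entropy solution.

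Finally, continuity from the right in $\Lloc1$ is a consequence of the approximation by $u^\e\in\C0(\rpic;\Lloc1)$ together with the $\Lloc1$ stability: if $t_n\downarrow t$, then $\norma{u(t_n)-u(t)}_{\L1(B(0,R))}\leq\liminf_\e\norma{u^\e(t_n)-u^\e(t)}_{\L1(B(0,R))}\to 0$. The continuous dependence claim for $u_0^n\to u_0$ in $\Lloc1$ is then a direct localized application of the stability estimate obtained in the doubling step with $f$ and $F$ identical, so the constants depend only on the cone $B(0,R+LT)$ and on $\gamma$.
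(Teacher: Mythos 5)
This theorem is recalled verbatim from \cite{Kruzkov}; the paper gives no proof of it, so there is nothing internal to compare your argument with. Your outline is the classical Kru\v zkov strategy (doubling of variables for $\L1$ stability and uniqueness, vanishing viscosity for existence), which is indeed how the cited result is established, and the uniqueness/stability half of the sketch is sound: for two entropy solutions of the same equation the commutator terms created by the $x$-dependence of $f$ cancel as the doubling parameter vanishes, leaving a Kato inequality whose right-hand side is $\sgn(u-v)\bigl((F-\div f)(t,x,u)-(F-\div f)(t,x,v)\bigr)$, and the truncated-cone Gronwall argument then yields uniqueness and the continuous-dependence clause.

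The genuine gap is in the existence step. You obtain $\Lloc1$ compactness of the viscous approximations $u^\epsilon$ from ``an $\Lloc1$ contraction between $u^\epsilon$ and its translates, derived by applying the same doubling argument to the regularized problem.'' When $f$ and $F$ depend on $x$, the translate $u^\epsilon(t,\cdot+h)$ solves a \emph{different} equation, so this comparison is not a contraction: the doubling argument between $u^\epsilon$ and its translate produces error terms of order $\modulo{h}$ weighted by $\norma{\nabla\pt_u f}_{\L\infty}$ and by $\int\!\!\int\norma{\nabla(F-\div f)}$ --- precisely the quantities the paper isolates in $\mathbf{(H2^*)}$, which are \emph{not} bounded under $\mathbf{(H1^*)}$ alone (only continuity of those derivatives is assumed there). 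Even in the $x$-independent case the translation estimate merely transfers the $\L1$ modulus of continuity of $u_0^\epsilon$, which must first be shown uniform in $\epsilon$ against that of $u_0\in\Lloc1$. Kru\v zkov's actual route is to get genuine equicontinuity of $u^\epsilon$ for smooth, compactly supported data via parabolic estimates (using the continuous second derivatives of $f$ demanded by $\mathbf{(H1^*)}$), construct the solution there, and only then reach general $u_0\in\L\infty(\reali^N;\reali)$ through the $\Lloc1$ stability estimate --- this is exactly why the paper cites Theorem~5 together with Section~5, Remark~4 of \cite{Kruzkov}; the ``Moreover'' clause is an ingredient of the existence proof, not only a corollary. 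Your right-continuity argument inherits the same dependency: interchanging $t_n\downarrow t$ with $\epsilon\to0$ requires a time modulus of continuity uniform in $\epsilon$, which again comes from the space modulus through the equation.
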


\subsection{Estimate on the Total Variation}

We give here a result similar to the one obtained by Colombo, Mercier and Rosini \cite[Theorem 2.5]{ColomboMercierRosini}, but under weaker assumptions.

\begin{theorem}
  \label{teo:tv}
  Assume that~$\mathbf{(H1^*)}$ and~$\mathbf{(H2^*)}$ hold. Let $u_0 \in
  (\L\infty\cap\L1\cap\BV)(\reali^N; \reali)$. Then, the weak entropy solution
  $u$ of~(\ref{eq:probv}) satisfies $u(t) \in \BV(\reali^N; \reali)$
  for all $t > 0$. Let $T_0$ be real positive. Let us denote $\mathcal{U}=\norma{u}_{\L\infty([0,T_0]\times \reali^N)}$, 
${U}_t= \sup_{y\in \reali^N}\modulo{u(t,y)}$, $\mathcal{S}_{T_0}(u)=\bigcup_{t\in [0,T_0]} \spt(u(t))$  and  
  \begin{eqnarray}
\Sigma_{T_0}^u&=&[0,T_0]\times \mathcal{S}_{T_0}(u)\times[-\mathcal{U}, \mathcal{U}]\,,\label{eq:KTU}\\
    \label{eq:kappao}
    \kappa^*_0
     &=&
      (2N+1) \, \norma{\nabla \, \partial_u
        f}_{\L\infty(\Sigma_{T_0}^u ;\reali^{N\times N})}
      +
      \norma{\partial_u F}_{\L\infty(\Sigma_{T_0}^u ;\reali)}
  \end{eqnarray}
  then for all $T\in  [0,T_0]$,  with $W_N$ as in~(\ref{eq:W}),
  \begin{equation}
    \label{result}
    \tv \left( u(T) \right)
    \leq
    \displaystyle
    \tv(u_0) \, e^{\kappa^*_0 T}
    \displaystyle
    +
    N W_N \!\! \int_0^T \!\!e^{\kappa^*_0(T-t)}\!\! \int_{\reali^N}\!
    \norma{\nabla( F - \div f)(t, x, \cdot)}_{\L\infty([-{U}_t,{U}_t];\reali)}
    \mathrm{d}x\, \mathrm{d}t \,.
  \end{equation}
\end{theorem}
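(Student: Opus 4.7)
The plan is to compare $u(t,\cdot)$ with its spatial translate $u(t,\cdot+h\nu)$ for directions $\nu\in S^{N-1}$ and shifts $h>0$, then divide by $h$, let $h\downarrow 0$, and integrate against the surface measure on $S^{N-1}$. This strategy, due to Colombo--Mercier--Rosini, converts an $\L1$-shift estimate for $u$ into a total-variation bound via a slicing formula of the type $\tv(w) = c_N \int_{S^{N-1}}\lim_{h\downarrow 0} h^{-1}\norma{w(\cdot+h\nu)-w(\cdot)}_{\L1}\,d\sigma(\nu)$, where $c_N$ is tied to the Wallis integrals through~\eqref{eq:W} and is ultimately responsible for the factor $NW_N$ in~\eqref{result}.

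A preliminary regularization step reduces us to the case in which $u_0$, $f$, and $F$ are smooth and the solution $u$ is classical with compact support in $x$ for each $t$; Theorem~\ref{teo:Kruzkov} gives the required $\Lloc1$-stability to conclude by density. A comparison-principle argument against constants, using the $\L\infty(\Sigma_{T_0}^u)$ bound on $F-\div f$, yields $\mathcal{U}\leq \norma{u_0}_{\L\infty}+T_0\norma{F-\div f}_{\L\infty(\Sigma_{T_0}^u)}$, and finite speed of propagation (encoded in the $\L\infty$ bound on $\pt_u f$) controls $\mathcal{S}_{T_0}(u)$. All subsequent $\L\infty$ norms of $f$, $\pt_u f$, etc., may therefore be localized onto $\Sigma_{T_0}^u$, which is precisely how the localized hypotheses of $\mathbf{(H1^*)}$--$\mathbf{(H2^*)}$ are exploited in place of the global ones of~\cite{ColomboMercierRosini}.

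The core estimate is a Kru\v zkov-type $\L1$-comparison between $u$ and the translate $v_h^\nu(t,x):=u(t,x+h\nu)$, which satisfies the ``shifted'' balance law with $f(t,x+h\nu,\cdot)$ and $F(t,x+h\nu,\cdot)$ in place of $f(t,x,\cdot)$ and $F(t,x,\cdot)$. Applying doubling of variables to this pair, one obtains
\[
\frac{d}{dt}\norma{u(t)-v_h^\nu(t)}_{\L1}\;\leq\;\kappa^*_0\,\norma{u(t)-v_h^\nu(t)}_{\L1}\;+\;h\,R_h^\nu(t)\;+\;o(h),
\]
where $R_h^\nu$ involves only the $x$-gradient of $\pt_u f$ (feeding the coefficient $\kappa^*_0$) and of the combination $F-\div f$ (feeding the source integrand in~\eqref{result}). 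A key algebraic observation is that in the Kru\v zkov dissipation identity the $\nabla_x f$ and $\nabla_x F$ terms regroup into the single quantity $\nabla(F-\div f)$; this is what allows us to invoke $\mathbf{(H2^*)}$ without requiring $\nabla f$ and $\nabla F$ to be integrable separately.

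Dividing by $h$, letting $h\downarrow 0$, integrating over $\nu\in S^{N-1}$ and applying the slicing formula then produces a Gronwall-type integral inequality for $\tv(u(t))$ whose integration is exactly~\eqref{result}. \textbf{The principal obstacle} will be the computation of $R_h^\nu$: one must organize the doubling-of-variables calculation so that (i) the cancellation $\nabla f,\nabla F\mapsto\nabla(F-\div f)$ is manifest, and (ii) the subsequent sphere integration delivers precisely the constant $(2N+1)$ in $\kappa^*_0$ (from $2N$ contributions of the form $\int_{S^{N-1}}\modulo{\nu_j}\,d\sigma$, $j=1,\ldots,N$, plus one contribution from $\pt_u F$) together with the factor $NW_N$ in front of the source integral. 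A secondary but genuine point is the rigorous justification of the smoothing argument with the localization onto $\Sigma_{T_0}^u$ preserved under the weaker regularity of $\mathbf{(H1^*)}$--$\mathbf{(H2^*)}$, which requires carrying the compact-support hypothesis through the approximation.
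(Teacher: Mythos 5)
Your overall architecture (doubling of variables, a representation of $\tv$ as a limit of $\L1$ moduli of continuity, Gronwall) matches the paper's, but the specific route you propose — treating each directional translate $v_h^\nu(t,x)=u(t,x+h\nu)$ as a solution of a \emph{shifted} balance law, comparing $u$ with $v_h^\nu$, then averaging over $S^{N-1}$ — hides a circularity that your sketch does not resolve. In the doubling of variables between $u$ and $v_h^\nu$, the discrepancy between the flux $f(t,x,\cdot)$ and its translate $f(t,x+h\nu,\cdot)$ produces a term of the form $\int \left((g-f)(t,y,u)-(g-f)(t,y,v)\right)\cdot\nabla\Psi\,\Phi$ (the term $K$ in the paper's Section 5). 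Its integrand is only $O(h\,\modulo{u-v})$ pointwise, but $\nabla\Psi$ has mass of order $\gd^{-1}$, so this term is \emph{not} $O(h)\norma{u-v_h^\nu}_{\L1}+o(h)$; after integration by parts it is bounded by $h\,\norma{\nabla\pt_u f}_{\L\infty}\int_0^T\tv(u(t))\,\mathrm{d}t$ — i.e.\ by the very quantity you are trying to estimate. Your displayed differential inequality, with everything absorbed into $\kappa_0^*\norma{u-v_h^\nu}_{\L1}+hR_h^\nu+o(h)$ where $R_h^\nu$ involves only $\nabla\pt_u f$ and $\nabla(F-\div f)$ as data, is therefore not obtainable. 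One can still close a Gronwall loop after dividing by $h$ and averaging over $\nu$, but only once $\tv(u(t))<\infty$ is known a priori; the paper devotes a genuine step to this (the short-time bootstrap on $\mathcal{F}(T,\gd)$ for $T<T_1$ as in~(\ref{eq:T1}), followed by additivity in time), and your proposal omits it entirely. A second, independent error is the reduction to ``$u$ classical'': entropy solutions develop shocks in finite time even for smooth compactly supported data, so no regularization of $u_0$, $f$, $F$ makes $u$ classical on $[0,T_0]$. The paper only smooths $u_0$ and works throughout with the entropy inequalities~(\ref{eq:utv})--(\ref{eq:vtv}); your sketch of $R_h^\nu$ implicitly differentiates $u$.

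Your accounting of the constants is also not coherent with the statement. In $\kappa_0^*=(2N+1)\norma{\nabla\pt_u f}+\norma{\pt_u F}$ the factor $(2N+1)$ multiplies $\norma{\nabla\pt_u f}$ alone; it does not decompose as ``$2N$ sphere integrals plus one contribution from $\pt_u F$''. In the paper it arises as $(N+1)+N$: the $(N+1)$ from the moment identity $\int\modulo{z_1}\,\norma{z}\,\mu_1'(\norma{z})\,\mathrm{d}z=-(N+1)C_1$ applied to the term $J_x$, and the $N$ from the trace of $\nabla\pt_u f$ in the term $L_1$ (the $\div f(t,x,u)-\div f(t,x,v)$ contribution), while $NW_N=M_1/C_1$ comes from~(\ref{eq:mu2}) and~(\ref{eq:W}) in the bound of $L_2$ (see Remark~\ref{rem:opt} for why it cannot be removed). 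If you pursue the translate-and-average route you must redo this bookkeeping from scratch — the sphere average $\int_{S^{N-1}}\modulo{\nu_1}\,\mathrm{d}\sigma$ produces the ratio $NW_N$, and it is not clear your method reproduces $(2N+1)$ rather than some other constant — so as written the proposal neither proves the stated inequality nor identifies where its coefficients come from.
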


\begin{remark}
Note that, with $c=\norma{\pt_u f}_{\L\infty(\Omega_{T_0}^{\mathcal{U}})}$, we have $\spt u(t)\subset \spt u_0 +B(0,ct)$. Consequently,
$$
\mathcal{S}_{T_0}(u)\subset \spt u_0+ B(0,c\,T_0)\,.
$$ 
\end{remark}

We can note here several improvements with respect to \cite[Theorem 2.5]{ColomboMercierRosini}. 
First, as we already noted, the set of hypotheses is weaker since we do not require $f$ to be $\C2$ and $F$ to be $\C1$ with respect 
to the time variable: 
they only have to be continuous in time, which is useful in applications, see for example  \cite{chm_control}.

A second improvement stands in  the $\L\infty$ norms, that are taken on  smaller domains than in \cite{ColomboMercierRosini}. 

Last, the expression of the coefficient $\kappa^*_0$ that does not content any 
longer the constant $N W_N$. Indeed, in \cite[Theorem 2.5]{ColomboMercierRosini} it was given by 
\[
\kappa_0=NW_N\left((2N+1) \norma{\nabla\pt_u f}_{\L\infty(\Omega;\reali^{N\times N})}+ \norma{\pt_u F}_{\L\infty(\Omega;\reali)}\right)
\]

Besides, it does not  seem  possible to erase the coefficient $NW_N$ completely from the expression  
(\ref{result}), except in the case  $F$ and $f$ do not depend on $u$,  see Remark \ref{rem:opt}.

An important corollary of this theorem is that we have now a criterium for having solution continuous in time instead of continuous from the right. This is the analogous of \cite[Theorem 4.3.1]{DafermosBook} for general flows and sources. We use here the  same notations as in Theorem \ref{teo:tv}.
\begin{corollary}\label{cor:lip}
 Assume that $(f,F)$ satisfy~$\mathbf{(H1^*)}$, $\mathbf{(H2^*)}$ and $\mathbf{(H3^*)}$. Let $u_0 \in
  (\L\infty\cap\L1\cap\BV)(\reali^N; \reali)$ and let $u$ be the weak entropy solution of (\ref{eq:probv}). Then
$
 u\in \C0([0,T], \L1(\reali^N;\reali))
$ and for any $s,t\in [0,T]$ we have the estimate
\begin{eqnarray}
 \norma{u(t)-u(s)}_{\L1}&\leq& \modulo{ \int_s^t\int_{\reali^N}
    \norma{( F - \div f)(\tau, x, \cdot)}_{\L\infty([-\mathcal{U},\mathcal{U}];\reali)}
    \mathrm{d}x } \label{eq:lip}\\[5pt]
&&+ \modulo{s-t}\norma{\pt_u f}_{\L\infty(\Sigma_{T}^u)}\sup_{\tau\in [0,T]} \tv(u(\tau))\,.\nonumber
\end{eqnarray}

If furthermore, for $T_0>0$, instead of \textbf{(H3*)}, the condition 
$$\sup_{t\in [0,T_0]}\int_{\reali^N}
    \norma{( F - \div f)(t, x, \cdot)}_{\L\infty([-\mathcal{U},\mathcal{U}];\reali)}
    \mathrm{d}x\, \mathrm{d}t<\infty$$ 
holds, then the application $t\in [0,T_0]\to u(t,\cdot)\in \L1(\reali^N,\reali)$ is Lipschitz.
\end{corollary}

\subsection{Stability of Solutions with Respect to Flow and Source}

We want now to estimate the difference $u-v$, where 
\begin{itemize}
 \item $u$ is the solution of (\ref{eq:probv}) with flow $f$, source $F$ and initial condition $u_0$,
 \item  $v$ is the solution of (\ref{eq:probv}) with flow $g$, source $G$ and initial condition $v_0$.
\end{itemize}
We search for an estimate of $u-v$ in term of $f-g$, $F-G$ and $u_0-v_0$.

F. Bouchut \& B. Perthame in~\cite{bouchutperthame} obtained such an estimate in the particular case
 $f$, $g$ depend only on $u$ and $F = G = 0$. The following result is an improvement of the result of R. Colombo, M. Mercier and M. Rosini \cite[Theorem 2.6]{ColomboMercierRosini}, in which we  gave a similar result under stronger assumptions and with a coefficient $\kappa^*$ that was not compatible with the result of Kru\v zkov (\ref{eq:kru}).

\begin{theorem}
  \label{teo:estimates}
  Let $(f, F)$, $(g,G)$ satisfy~$\mathbf{(H1^*)}$, $(f,F)$ satisfy
  $\mathbf{(H2^*)}$ and $(f-g,F-G)$ satisfy~$\mathbf{(H3^*)}$. Let $u_0, v_0
  \in \L\infty\cap\L1\cap\BV(\reali^N; \reali)$.  Let $T>0$ and let us denote 
\begin{equation}\label{eq:def}
\begin{array}{rl}
 \mathcal{V}&=\max(\norma{u}_{\L\infty([0,T]\times \reali^N)},\norma{v}_{\L\infty([0,T]\times\reali^N)})\,,\\[5pt]
V_t&=\sup_{y\in \reali^N}(\modulo{u(t,y)}, \modulo{v(t,y)})\,, \\[5pt]
\mathcal{S}_T(u,v)&=\bigcup_{t\in [0,T]} \left( \spt u(t) \cup \spt v(t)\right)\,,\\[5pt]
 \Sigma_{T}^{u,v}&=[0,T]\times\mathcal{S}_T(u,v)\times [-\mathcal{V},\mathcal{V}]\,.
\end{array}
\end{equation}

Furthermore, we define  $\kappa^*_0$, $U_t$,   $\Sigma_T^u$ as  in~(\ref{eq:kappao}) and  
  \begin{align} 
  \kappa^*
 &   =
    \norma{\pt_u F}_{\L{\infty}(\Sigma_T^{u,v} ; \reali)}+\norma{\pt_u \div(g-f)}_{\L\infty(\Sigma_{T_0}^{u,v})}\,, &
    M
&    =
    \norma{\partial_u g}_{\L\infty(\Omega_T^\mathcal{V};\reali^N)}\,. \label{eq:KM}
  \end{align}
  Then,  for any $R >0$ and $x_0 \in \reali^N$, the following
  estimate holds:
  \begin{eqnarray*}
    & &
    \int_{\norma{x-x_0}\le R} \modulo{u(T,x)-v(T,x)}\mathrm{d}x
    \; \leq \;
    e^{\kappa^* T}
    \int_{\norma{x-x_0}\leq R + M T} \modulo{u_0(x) - v_0(x)} 
    \, \mathrm{d}x
    \\
    & + &
    \frac{e^{\kappa^*_0 T}-e^{\kappa^* T}}{\kappa^*_0-\kappa^*}  \, \tv(u_0) \, 
    \norma{\pt_u(f-g)}_{\L{\infty}(\Sigma_T^u;\reali^N)}  
    \\
    & + &
    NW_N 
    \left(
      \int_0^T \frac{e^{\kappa^*_0 (T-t)}-e^{\kappa^* (T-t)}}{\kappa^*_0-\kappa^*} 
      \int_{\reali^N} \norma{\nabla(F-\div f)(t, x,\cdot)}_{\L\infty([-{U}_t,{U}_t])}
      \mathrm{d}x \, \mathrm{d}t
    \right)\\
&&\qquad \times
    \norma{\pt_u(f-g)}_{\L{\infty}(\Sigma_T^u;\reali^N)}
    \\
    & + &
    \int_0^T e^{\kappa^* (T-t)} \int_{\norma{x-x_0}\leq R+M(T-t)}
    \norma{\left((F-G) - \div(f-g) \right)(t,x,\cdot)}_{\L\infty([-{V}_t,{V}_t])} 
    \, \mathrm{d}x\, \mathrm{d}t \,.
  \end{eqnarray*}
\end{theorem}

This theorem is a direct consequence of lemma \ref{lem:main}.

\begin{remark}
Note as above that, with $c'=\max(\norma{\pt_u f}_{\L\infty(\Omega_{T_0}^{\mathcal{U}})}, \norma{\pt_u g}_{\L\infty(\Omega_{T_0}^{\mathcal{V}})})$, we have $\spt u(t)\subset \spt u_0 +B(0,c' \,t)$ and $\spt v(t)\subset \spt v_0 +B(0,c'\, t)$. Consequently,
$$
\mathcal{S}_{T}(u, v)\subset (\spt u_0\cup\spt v_0)+ B(0,c'\,T)\,.
$$ 
\end{remark}

\begin{remark}
As above, we can note some improvements with respect to \cite[Theorem 2.6]{ColomboMercierRosini}:
\begin{itemize}
 \item The hypotheses are weaker: no derivative in time is needed for $f$ and $F$.
\item The $\L\infty$ norms are taken on  smaller domains.
\item The coefficient $\kappa^*$ is better than the $\kappa$ given in \cite[Theorem 2.6]{ColomboMercierRosini} by
\[
\kappa= 2N\norma{\nabla\pt_u f}_{\L\infty(\Omega;\reali^{N\times N})} +\norma{\pt_u F}_{\L\infty{(\Omega;\reali)}}+ \norma{\pt_u(F-G)}_{\L\infty(\Omega;\reali)}\,.
\]
Indeed, $\kappa^*$ coincides with $\gamma$ in the case $f=g$ and consequently we recover the previous Kru\v zkov's result (\ref{eq:kru}), which was not the case with $\kappa$. 
\end{itemize}
\end{remark}

\begin{remark}
Note that, if $\kappa_0^*\geq \kappa^*$ then
\[
\frac{e^{\kappa^*_0 t}-e^{\kappa^* t}}{\kappa^*_0-\kappa^*}\leq e^{\kappa^* t}\int_0^te^{(\kappa_0^*-\kappa^*)\tau}\d{\tau} \leq te^{\kappa_0^* t}\,.
\]
As the expression is symmetric, we can conclude in the general case that, denoting $\kappa_1=\max (\kappa_0^*, \kappa^*)$, we have 
$
\frac{e^{\kappa^*_0 t}-e^{\kappa^* t}}{\kappa^*_0-\kappa^*}\leq te^{\kappa_1 t}
$. Let us assume that $\kappa_0^*\geq \kappa^*$; then the estimate of Theorem \ref{teo:estimates} can be rewritten
  \begin{eqnarray*}
    & &
    \int_{\norma{x-x_0}\le R} \modulo{u(T,x)-v(T,x)}\mathrm{d}x
    \; \leq \;
    e^{\kappa^* T}
    \int_{\norma{x-x_0}\leq R + M T} \modulo{u_0(x) - v_0(x)} 
    \, \mathrm{d}x
    \\
    & + &
    T  \tv(u(T)) \, 
    \norma{\pt_u(f-g)}_{\L{\infty}(\Sigma_T^u;\reali^N)}  
    \\
    & + &
   e^{\kappa^* T} \int_0^T  \int_{\norma{x-x_0}\leq R+M(T-t)}
    \norma{\left((F-G) - \div(f-g) \right)(t,x,\cdot)}_{\L\infty([-{V}_t,{V}_t])} 
    \, \mathrm{d}x\, \mathrm{d}t \,.
  \end{eqnarray*}
\end{remark}

Another consequence of Lemma \ref{lem:main} is the following proposition.

\begin{proposition}\label{thm:stab2}
Let $(f, F)$, $(g,G)$ satisfy~$\mathbf{(H1^*)}$, $(f,F)$ satisfy
  $\mathbf{(H2^*)}$ and $(f-g,F-G)$ satisfy~$\mathbf{(H3^*)}$. Let $u_0, v_0
  \in \L\infty\cap\L1\cap\BV(\reali^N; \reali)$. Let $T>0$.
  Then, using the same notation as in (\ref{eq:def})--( \ref{eq:KM}),  for any $R >0$ and $x_0 \in \reali^N$, the following
  estimate holds:
  \begin{eqnarray*}
    & &
    \int_{\norma{x-x_0}\le R} \modulo{u(T,x)-v(T,x)}\mathrm{d}x
    \; \leq \;
    e^{\kappa^* T}
    \int_{\norma{x-x_0}\leq R + M T} \modulo{u_0(x) - v_0(x)} 
    \, \mathrm{d}x
    \\
    & + &
  \left[\tv(u_0)
  +NW_N  \int_0^T e^{-\kappa_0^* t}\int_{\reali^N} \norma{\nabla(F-\div f)(t,x,\cdot)}_{\L\infty([-U_t, U_t])}\d{x}\d{t}\right] \\
  &&\qquad\times  \frac{\kappa_0^*e^{\kappa_0^* t}- \kappa^*e^{\kappa^* t}}{\kappa_0^*-\kappa^*}\int_0^T \norma{\pt_u(f-g)(t)}_{\L\infty(\mathcal{S}_{T}\times[-V_t, V_t])}\d{t}\\
  &+&e^{\kappa^* T} \int_0^T \int_{\norma{x-x_0}\leq R+M(T-t)}
    \norma{\left((F-G) - \div(f-g) \right)(t,x,\cdot)}_{\L\infty([-{V}_t,{V}_t])} 
    \, \mathrm{d}x\, \mathrm{d}t \,.
  \end{eqnarray*}
\end{proposition}

This proposition  is useful in \cite{ColomboGaravelloLecureux}, where we studied the equation
\[
 \pt_t u +\div(u(1-u) w(u *_x \eta))=0\,,\qquad \qquad u(0,\cdot)=\bar u\,,
\]
and in particular, the stability with respect to $\eta$. The use of proposition \ref{thm:stab2} allows 
then to apply Gronwall lemma and gives us the following stability result. We assume here that we have existence and uniqueness of weak entropy solutions, as obtained in  \cite{ColomboGaravelloLecureux}.
\begin{proposition}
Let $w\in\Lip(\reali, \reali)$ be such that $w'\in \W1\infty(\reali, \reali)$, $\eta_1, \eta_2\in \W21\cap\W1\infty(\reali^N, \reali)$, $\bar u\in \L1\cap\L\infty\cap\BV(\reali^N, [0,1])$. Let $u_1, u_2\in \C0(\rpic, \L1(\reali^N,  [0,1] )) $ be weak entropy solutions to the Cauchy problems (for $i=1, 2$):
\[
  \pt_t u_i +\div(u_i(1-u_i) w(u_i *_x \eta_i))=0\,,\qquad \qquad u_i(0,\cdot)=\bar u\,.
\]
 Then, we have the  stability estimate:
 \[
 \norma{(u_1-u_2)(t)}_{\L1}\leq C(t)\norma{\eta_1-\eta_2}_{\W11}\,,
\]
where $C$ depends on $\norma{\bar u}_{\L1}, \norma{u_1}_{\L\infty([0,T]\times \reali^N)}, \norma{u_2}_{\L\infty([0,T]\times \reali^N)}$  and on various norms on $\eta$ and $w$.
\end{proposition}

\begin{proof}
 Applying Theorem \ref{thm:stab2}, we obtain
\begin{align*}
\norma{u_1(T)-u_2(T)}_{\L1}
\leq & a(T)+b(T)\int_0^T \norma{u_1-u_2(t)}_{\L1}\d{t}
\end{align*}
where $a$ and $b$ are regular and increasing functions of $T$.
Applying Gronwall Lemma, we obtain the desired estimate.
\end{proof}

\section{Tools on functions with  bounded variation}
\label{sec:prelim}

Recall  the following theorem (see~\cite[Theorem~3.9 and
Remark~3.10]{AmbrosioFuscoPallara}):

\begin{theorem}
  \label{thm:AFP}
  Let $u \in \Lloc1(\reali^N;\reali)$. Then $u \in \BV(
  \reali^N;\reali)$ if and only if there exists a sequence $(u_n)$ in
  $\C\infty (\reali^N;\reali)$ converging to $u$ in $\Lloc1$ and
  satisfying
  \begin{displaymath}
    \lim_{n\to +\infty} \int_{\reali^N} \norma{\nabla u_n (x)}\, \mathrm{d}x
    =
    L \quad \mbox{ with } \quad
    L < \infty \,.
  \end{displaymath}
  Moreover, $\tv(u)$ is the smallest constant $L$ for which there exists
  a sequence as above.
\end{theorem}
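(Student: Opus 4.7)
The plan is to prove both directions, along with the minimality statement, by combining a standard mollification argument with the lower semicontinuity of $\tv$ under $\Lloc1$ convergence.

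First, for the implication $u\in\BV\Rightarrow$ existence of a suitable sequence, I would take a standard smooth mollifier $\rho_\epsilon$ supported in $B(0,\epsilon)$ with unit integral and set $u_\epsilon = u*\rho_\epsilon$. Classical convolution estimates give $u_\epsilon\in\C\infty(\reali^N;\reali)$ and $u_\epsilon\to u$ in $\Lloc1$ as $\epsilon\to 0$. Since $u\in\BV(\reali^N;\reali)$, the distributional gradient $Du$ is a finite $\reali^N$-valued Radon measure with $\modulo{Du}(\reali^N)=\tv(u)$, and $\nabla u_\epsilon = \rho_\epsilon * Du$ pointwise; a Fubini argument then yields
\[
\int_{\reali^N}\norma{\nabla u_\epsilon(x)}\,\d x \;\leq\; \modulo{Du}(\reali^N) \;=\; \tv(u).
\]
Extracting any sequence $\epsilon_n\downarrow 0$ produces a smooth sequence fulfilling the hypotheses of the theorem with $L\leq\tv(u)$.

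Next, for the converse and the sharpness in a single stroke, I would take any smooth sequence $(u_n)$ with $u_n\to u$ in $\Lloc1$ and $\int\norma{\nabla u_n(x)}\,\d x\to L<\infty$. For an arbitrary test field $\varphi\in\Cc1(\reali^N;\reali^N)$ with $\norma{\varphi}_{\L\infty}\leq 1$, integration by parts on each $u_n$ gives $\int u_n\,\div\varphi\,\d x = -\int\nabla u_n\cdot\varphi\,\d x$. The compact support of $\varphi$ permits passage to the limit on the left-hand side via $\Lloc1$ convergence, while the right-hand side is bounded in absolute value by $\int\norma{\nabla u_n(x)}\,\d x\to L$. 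Taking the supremum over such $\varphi$ and using the dual characterisation of the total variation yields $u\in\BV(\reali^N;\reali)$ with $\tv(u)\leq L$. Combined with the mollification sequence above, which achieves $L=\tv(u)$ in the limit, this shows that $\tv(u)$ is exactly the smallest admissible constant.

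The only potentially delicate step is the lower semicontinuity used in the converse; however, once $\tv$ is viewed via its dual representation $\tv(u)=\sup\{-\int u\,\div\varphi:\varphi\in\Cc1(\reali^N;\reali^N),\,\norma{\varphi}_{\L\infty}\leq 1\}$, the estimate is immediate from the compact support of each fixed test field. Beyond that, nothing more than standard convolution and Fubini estimates is required, so the remainder of the proof is essentially a routine assembly.
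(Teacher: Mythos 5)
Your proposal is correct, but note that the paper does not actually prove this statement: it is quoted verbatim from Ambrosio--Fusco--Pallara (Theorem~3.9 and Remark~3.10 there), so there is no internal proof to compare against. What you give is precisely the standard textbook argument behind that citation: mollification plus the bound $\int_{\reali^N}\norma{\nabla(u*\rho_\epsilon)}\,\d x\leq\modulo{Du}(\reali^N)$ for the direct implication, and lower semicontinuity of the total variation under $\Lloc1$ convergence, obtained through the dual characterisation $\tv(u)=\sup\bigl\{\int_{\reali^N}u\,\div\varphi\,\d x:\varphi\in\Cc1(\reali^N;\reali^N),\ \norma{\varphi}_{\L\infty}\leq 1\bigr\}$, for the converse and the minimality of $L=\tv(u)$. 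Both halves fit together exactly as you say: the converse gives $L\geq\tv(u)$ for every admissible sequence, and the mollified sequence attains a value $L\leq\tv(u)$, so the infimum is achieved and equals $\tv(u)$. The only cosmetic point is that in the first step you should either extract a subsequence of $(\epsilon_n)$ along which $\int_{\reali^N}\norma{\nabla u_{\epsilon_n}}\,\d x$ converges (it is bounded by $\tv(u)$, so this is automatic), or observe that the full limit exists and equals $\tv(u)$ once the two inequalities are combined; as written, the existence of the limit $L$ required by the statement is asserted slightly before it is justified. This does not affect the validity of the argument.
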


Let us also recall the following property of any function $u \in
\BV(\reali^N;\reali)$:
\begin{equation}
  \label{eq:afp}
  \int_{\reali^N} \modulo{u(x)-u(x-z)} \, \mathrm{d}x
  \leq
  \norma{z} \, \tv(u) 
  \qquad \mbox{ for all } z \in \reali^N .
\end{equation}
For a proof, see~\cite[Remark~3.25]{AmbrosioFuscoPallara}.

Now, in a similar way as  J. D\'avila \cite{davila}, we prove the following proposition, which is an improvement of \cite[Proposition 4.3]{ColomboMercierRosini}. Indeed, in \cite[Proposition 4.3]{ColomboMercierRosini},  the equality (\ref{eq:limm}) is valid only for $u\in \C1$. In the present proposition we extend this result to all $u\in \BV$.

\begin{proposition}
  \label{prop:normtv2}
Let $\rho_1\in \Cc\infty(\reali,\rpic)$ with $\spt \rho_1\subset [-1,1]$. Let  $u \in \Lloc1(\reali^N;  \reali)$. 
For all  $\gd >0$, we introduce $\rho_\gd$ such that $\rho_\lambda(x)=\frac{1}{\gd^N}\rho_1\left(\frac{\norma{x}}{\gd}\right)$. 
Assume that there exists a constant $\widetilde C$  such that for all
  $\gd$, $R$ positive,
  \begin{equation}
    \frac{1}{\gd} \int_{\reali^N} \int_{B(x_0,R)}
    \modulo{u(x)-u(x-z)} \, \rho_\gd(z)
    \, \mathrm{d}x \, \mathrm{d}z
    \leq
    \widetilde C\,.
  \end{equation}
 Then $u \in \BV(\reali^N; \reali)$ and
  \begin{equation}\label{eq:limm}
    \tv (u)
    =
    \frac{1}{C_1}
    \lim_{\gd \to 0}
    \frac{1}{\gd} \int_{\reali^N} \int_{\reali^N}
    \modulo{u(x)-u(x-z)} \, \rho_\gd(z)
    \, \mathrm{d}x \, \mathrm{d}z \,,
  \end{equation} 
where
\begin{equation}
    \label{eq:Cst0}
    C_1
    =
    \int_{\reali^N} \modulo{x_1} \, \rho_1\left(\norma{x} \right) 
    \, \mathrm{d}x \,.
  \end{equation}
\end{proposition}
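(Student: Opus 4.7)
My plan adapts Davila's strategy \cite{davila}, extending the smooth-function statement of \cite[Proposition 4.3]{ColomboMercierRosini} to every $u\in\BV$ in three steps, the last of which is the main work.

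\emph{Step 1 (smooth case).} For $u\in \C1\cap \W11(\reali^N;\reali)$, I would write $u(x)-u(x-z)=\int_0^1\nabla u(x-sz)\cdot z\,\d s$, perform the change of variable $z=\gd y$, and apply dominated convergence as $\gd\to 0$ to obtain
\[
\lim_{\gd\to 0}\frac{1}{\gd}\int_{\reali^N}\!\!\int_{\reali^N}\modulo{u(x)-u(x-z)}\rho_\gd(z)\,\d x\,\d z
= \int_{\reali^N}\!\!\int_{\reali^N}\rho_1(\norma{y})\modulo{\nabla u(x)\cdot y}\,\d y\,\d x.
\]
Orienting $y$ so that $\nabla u(x)$ aligns with $e_1$ collapses the inner $y$-integral to $C_1\modulo{\nabla u(x)}$, producing $C_1\tv(u)$. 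The same change of variable yields moreover the $\gd$-uniform bound $J_\gd(u)\leq C_1\int\modulo{\nabla u}\,\d x$ for every $\gd>0$, writing $J_\gd(u)$ for the quantity inside the limit in (\ref{eq:limm}).

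\emph{Step 2 (hypothesis implies $\BV$).} Let $\eta_\epsilon$ be a standard radial mollifier and set $u_\epsilon=u*\eta_\epsilon\in \C\infty$. A Fubini/translation computation gives
\[
\int_{B(x_0,R)}\modulo{u_\epsilon(x)-u_\epsilon(x-z)}\,\d x \leq \int_{B(x_0,R+\epsilon)}\modulo{u(x)-u(x-z)}\,\d x,
\]
so the hypothesis transfers to $u_\epsilon$ with the same constant $\widetilde C$. Applying Step 1 to the smooth $u_\epsilon$ (carrying $\mathbf{1}_{B(x_0,R)}$ through the calculation) and then sending $R\to\infty$ gives $C_1\int_{\reali^N}\modulo{\nabla u_\epsilon}\,\d x\leq \widetilde C$ for every $\epsilon$. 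Since $u_\epsilon\to u$ in $\Lloc1$, Theorem~\ref{thm:AFP} concludes $u\in\BV$ with $\tv(u)\leq \widetilde C/C_1$.

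\emph{Step 3 (the equality).} For $u\in\BV$ I would prove matching bounds. The representation $u-u_\epsilon=\int(u(\cdot)-u(\cdot-y))\eta_\epsilon(y)\,\d y$ combined with (\ref{eq:afp}) gives $\norma{u-u_\epsilon}_{\L1}\leq \epsilon\,\tv(u)$. For the upper bound, the elementary estimate $\modulo{J_\gd(u)-J_\gd(u_\epsilon)}\leq C_{\rho_1}\norma{u-u_\epsilon}_{\L1}/\gd$ together with the $\gd$-uniform bound from Step 1 and the convolution estimate $\modulo{\nabla u_\epsilon}\leq \eta_\epsilon*\modulo{Du}$ (yielding $\int\modulo{\nabla u_\epsilon}\,\d x\leq \tv(u)$) combine, upon sending $\epsilon\to 0$ at fixed $\gd$, to give $J_\gd(u)\leq C_1\,\tv(u)$ for every $\gd$. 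For the lower bound, the same convolution inequality gives $J_\gd(u_\epsilon)\leq J_\gd(u)$; applying Step 1 to $u_\epsilon$ and then letting $\epsilon\to 0$ (using lower semicontinuity of $\tv$ under $\Lloc1$-convergence via Theorem~\ref{thm:AFP}) yields $C_1\,\tv(u)\leq \liminf_\gd J_\gd(u)$. Together these produce (\ref{eq:limm}).

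The main obstacle I anticipate is Step 3, and specifically the bookkeeping of the two parameters $\gd$ and $\epsilon$: the upper bound needs $\epsilon\to 0$ first (at fixed $\gd$, to neutralize the factor $1/\gd$), while the lower bound needs $\gd\to 0$ first; reaching the sharp constant $C_1$ in both directions while keeping these limits in the correct order is the delicate point of the argument.
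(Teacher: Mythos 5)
Your proposal is correct and follows essentially the same route as the paper's proof: regularize, use the exact identity $\int_{\reali^N}\modulo{\nabla w(x)\cdot z}\,\rho_1(\norma{z})\,\mathrm{d}z=C_1\norma{\nabla w(x)}$ (the half-space computation) to treat the smooth case, deduce $u\in\BV$ and the liminf lower bound from the hypothesis via Fatou and Theorem~\ref{thm:AFP}, and get the matching limsup upper bound from smooth approximants whose gradient masses converge to $\tv(u)$. The only cosmetic difference is that for the upper bound the paper invokes a general approximating sequence supplied by Theorem~\ref{thm:AFP}, where you use the mollification together with $\int\modulo{\nabla u_\epsilon}\leq\tv(u)$; the two are interchangeable.
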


\begin{proof}

Note that the first part of the proof is the same as the first part of the proof of \cite[Proposition 4.3]{ColomboMercierRosini}. 
We introduce a regularisation of  $u$:   $u_h = u \ast \mu_h$, with
  $\mu_h(x) = \mu_1 \left( \norma{x}/h \right) / h^N$, where $\mu_1$  is defined as in  \Ref{eq:mu}. We note that $u_h
  \in \C\infty (\reali^N; \reali)$ and that  $u_h$  tends to   $u$ in 
  $\Lloc1$ when $h \to 0$.
Furthermore,  for  $R$ and $h$ positive,  by change of variables we get
  \begin{eqnarray*}
&& \int_{\reali^N} \int_{B(x_0,R-h)}
    \modulo{ \int_0^1 \nabla u_h(x-\gd s z) \cdot z \,\mathrm{d}s } \, \rho_1(\norma{z})
    \, \mathrm{d}x \, \mathrm{d}z \\
&=&    \frac{1}{\gd}
    \int_{\reali^N} \int_{B(x_0,R-h)}
    \modulo{u_h(x)-u_h(x-\gd z)} \, \rho_1(\norma{z})
    \, \mathrm{d}x \, \mathrm{d}z
    \\
    & \leq &
    \frac{1}{\gd}
    \int_{\reali^N} \int_{B(x_0,R)}
    \modulo{u(x)-u(x-\gd z)} \, \rho_1(\norma{z})
    \, \mathrm{d}x \, \mathrm{d}z
    \\
    & \leq &
    \widetilde C\,.
  \end{eqnarray*}
Making $R\to \infty$ and using the Dominated Convergence Theorem when $\gd \to 0$, we obtain
\begin{eqnarray*}
&& \int_{\reali^N} \int_{\reali^N}
    \modulo{ \nabla u_h(x) \cdot z } \, \rho_1(\norma{z})
    \, \mathrm{d}x \, \mathrm{d}z\\
&\leq& \liminf_{\gd\to 0} \frac{1}{\gd}
    \int_{\reali^N} \int_{B(x_0,R)}
    \modulo{u(x)-u(x-\gd z)} \, \rho_1(\norma{z})
    \, \mathrm{d}x \, \mathrm{d}z\,.
\end{eqnarray*}

 Remark that for fixed $x\in \reali^N$, when $\nabla u_h(x) \neq 0$,
  the scalar product $\nabla u_h(x)\cdot z$ is positive (respectively,
  negative) when $z$ is in a half-space, say $H_x^+$ (respectively,
  $H_x^-$). We can write $z = \alpha \frac{\nabla
    u_h(x)}{\norma{\nabla u_h(x)}} + w$, with $\alpha \in \reali$ and
  $w$ in the hyperplane $H^o_x = \nabla u_h(x)^\perp$. Hence
  \begin{eqnarray*}
    \int_{\reali^N}
    \modulo{\nabla u_h(x) \cdot z} \, \mu_1(\norma{z}) \, \mathrm{d}z
    & = &
    \int_{H^+_x} \!\!
    \nabla u_h(x) \cdot z \, \mu_1(\norma{z}) \,
    \mathrm{d}z
    +
    \int_{H^-_x} \!\!
    \nabla u_h(x) \cdot (-z) \, \mu_1(\norma{z}) \,
    \mathrm{d}z
    \\
    & = &
    2 \int_{H^+_x}
    \nabla u_h(x) \cdot z \, \mu_1(\norma{z}) \, \mathrm{d}z
    \\
    & = &
    2 \int_{\rpis} \int_{H^o_x}
    \alpha \, \norma{\nabla u_h(x)} \,
    \mu_1(\sqrt{\alpha^2+\norma{w}^2})
    \, \mathrm{d}w \, \mathrm{d}\alpha
    \\
    & = &   \int_{\reali} \int_{H^o_x}
    \modulo{\alpha} \, \norma{\nabla u_h(x)} \,
    \mu_1(\sqrt{\alpha^2+\norma{w}^2})
    \, \mathrm{d}w \, \mathrm{d}\alpha
    \\
    & = & \norma{\nabla u_h(x)} \int_{\reali^N} \modulo{z_1} \,
    \mu_1(\norma{z}) \, \mathrm{d}z\, .
  \end{eqnarray*}
 So we obtain
\begin{equation}\label{eq:liminf}
 \tv(u) \leq \frac{1}{ C_1} \liminf_{\gd\to 0}  \frac{1}{\gd}
    \int_{\reali^N} \int_{\reali^N}
    \modulo{u(x)-u(x-z)} \, \rho_\gd(z)
    \, \mathrm{d}x \, \mathrm{d}z \leq \frac{ \widetilde C}{ C_1}\,.
\end{equation}

Now, let  $(u_n)$ be a sequence of functions in  $\C\infty(\reali^N,\reali)$ converging to $u$ in  $\Lloc1$ and such that   $\int_{\reali^N} \norma{\nabla u_n(x)}\d{x}$ converges to $\tv(u)$ when $n\to \infty$.
Then, doing the same computation as above, we obtain
\begin{eqnarray*}
 & \displaystyle\frac{1}{\gd}
    \int_{\reali^N} \int_{B(x_0,R)}
    \modulo{u_n(x)-u_n(x-\gd z)} \, \rho_1(\norma{z})
    \, \mathrm{d}x \, \mathrm{d}z
    \\
   \leq&  \displaystyle  \int_{\reali^N}\int_{B(x_0,R)}\int_0^1\modulo{\nabla u_n (x-\gd s z)\cdot z}\rho_1(\norma{z})\d{s}\d{x}\d{z}\\
\leq& \displaystyle\int_{\reali^N}\int_0^1 \int_{B(x_0,R+\gd)} \modulo{\nabla u_n (x')\cdot z}\rho_1(\norma{z})\d{x'}\d{s}\d{z}\\
=& \displaystyle \int_{B(x_0,R+\gd)}\norma{\nabla u_n (x)} C_1\d{x}\\
\leq& C_1\tv(u_n, B(x_0,R+\lambda)) \,.
\end{eqnarray*}
Taking $R\to\infty$ and then $n\to \infty$,  we have consequently
\[
\frac{1}{\gd}
    \int_{\reali^N} \int_{\reali^N}
    \modulo{u(x)-u(x-\gd z)} \, \rho_1(\norma{z})
    \, \mathrm{d}x \, \mathrm{d}z \leq C_1 \tv(u)\,.
\]
Then, we 
take the supremum limit when $\gd$ goes to 0. We obtain 
\begin{equation}\label{eq:limsup}
\limsup_{\gd\to 0} \frac{1}{\lambda} \int_{\reali^N}\int_{\reali^N}\modulo{u(x)-u(x-z)}\rho_\gd(z)\d{x}\d{z}\leq C_1\tv(u)\,.
\end{equation}

We conclude the proof by reassembling  (\ref{eq:liminf}) and (\ref{eq:limsup}).
\end{proof}

\section{Proof of the Total Variation estimate}\label{sec:prooftv}

The following proof is quite similar to the one of \cite[Theorem 2.5]{ColomboMercierRosini}. 
The differences come from the use of Proposition \ref{prop:normtv2} instead of \cite[Proposition 4.3]{ColomboMercierRosini} 
and from avoiding the derivatives in time to appear. In order to be clear, we rewrite here most of the steps of the proof. In particular, the beginning of the proof is similar to \cite[proof of Theorem 2.5]{ColomboMercierRosini} up to (\ref{eq:nu}).

\begin{proofof}{Theorem~\ref{teo:tv}}
  First, we assume that $u_0 \in \C1(\reali^N;\reali)$. The general case
  will be considered only at the end of this proof.

By Kru\v zkov Theorem \cite[Theorem 5 \& Section 5 Remark 4]{Kruzkov}, the set of hypotheses \textbf{(H1*)} gives us existence and uniqueness of a weak entropy solution for any initial condition $u_0\in \L\infty\cap\L1(\reali^N;\reali)$. 
  Let $u$ be the weak entropy solution to~(\ref{eq:probv}) associated to $u_0\in (\L\infty\cap\L1\cap\BV)(\reali^N;\reali)$. Let us denote $u
  = u(t,x)$ and $v = u(s,y)$ for $(t,x), (s,y) \in \rpis \times
  \reali^N$. Then, for all $k,l \in \reali$ and for all test functions
  $\phi = \phi(t,x,s,y)$ in $\Cc1 \left((\rpis \times \reali^N)^2;
    \rpic \right)$, we have
  \begin{equation}
    \label{eq:utv}
    \!\!
    \begin{array}{rc}
      \displaystyle
      \!\!\int_{\rpis} \!\! \int_{\reali^N} \!\!
      \left[
        (u-k) \, \pt_t \phi
        +
        \left( f(t,x,u) - f(t,x,k) \right) \nabla_x \phi
        +
        \left(F(t,x,u)-\div f(t,x,k)\right)\phi
      \right]&
      \\
      \times
      \mathrm{sign}(u-k) \,
      \mathrm{d}x \, \mathrm{d}t
      &\geq
      0
    \end{array}
  \end{equation}
  for all $(s,y) \in \rpis \times \reali^N$, and
  \begin{equation}
    \label{eq:vtv}
    \!\!
    \begin{array}{rc}
      \displaystyle
      \!\!\!\!\!\!
      \int_{\rpis} \!\! \int_{\reali^N} \!\!
      \left[
        (v-l) \, \pt_s \phi
        +
        \left( f(s,y,v) - f(s,y,l) \right) \nabla_y \phi
        +
        (F(s,y,v)-\div f(s,y,l))\phi
      \right]&
      \\
      \times
      \mathrm{sign}(v-l) \,
      \mathrm{d}y \, \mathrm{d}s
      &\geq
      0
    \end{array}
  \end{equation}
  for all $(t,x) \in \rpis \times \reali^N$. Let $\Phi \in \Cc\infty
  (\rpis \times \reali^N;\rpic)$, $\Psi \in \Cc\infty (\reali \times
  \reali^N;\rpic)$ and set
  \begin{equation}
    \label{eq:phi}
    \varphi(t,x,s,y)=\Phi(t,x) \, \Psi(t-s,x-y) \,.
  \end{equation}
  Observe that $\pt_t \varphi + \partial_s \phi = \Psi \, \pt_t \Phi$,
  $\nabla_x \phi = \Psi \, \nabla_x\Phi + \Phi \, \nabla_x\Psi$,
  $\nabla_y \phi = -\Phi \, \nabla_x\Psi$. Choose $k = v(s,y)$
  in~(\ref{eq:utv}) and integrate with respect to
  $(s,y)$. Analogously, take $l = u(t,x)$ in~(\ref{eq:vtv}) and
  integrate with respect to $(t,x)$. Summing the obtained
  inequalities, we obtain
  \begin{equation}
    \label{eq:sumtv}
   \begin{array}{rc}
      \!\!  \displaystyle
      \int_{\rpis} \!\! \int_{\reali^N} \!\!
      \int_{\rpis} \!\! \int_{\reali^N} \!\!
      \!\!\!\!
      \mathrm{sign}(u-v)
      \bigg[
      (u-v) \, \Psi \, \pt_t \Phi +
      \left( f(t,x,u) - f(t,x,v) \right) \cdot
      \left( \nabla \Phi \right) \Psi &
      \\
      +
      \left( f(s,y,v) - f(s,y,u) - f(t,x,v) + f(t,x,u) \right) \cdot
      \left( \nabla \Psi \right) \Phi&
      \\
      +
      \left(F(t,x,u) - F(s,y,v) + \div f(s,y,u) - \div f(t,x,v) \right) \phi
      \bigg]
      \mathrm{d}x \, \mathrm{d}t \, \mathrm{d}y \, \mathrm{d}s
      &\geq
      0 .
    \end{array}
  \end{equation}
  Introduce a family of functions $\{Y_\vartheta\}_{\vartheta>0}$ such
  that for any $\vartheta>0$:\\
    \begin{figure}[htbp]
\centering
      \includegraphics[width=0.45\linewidth]{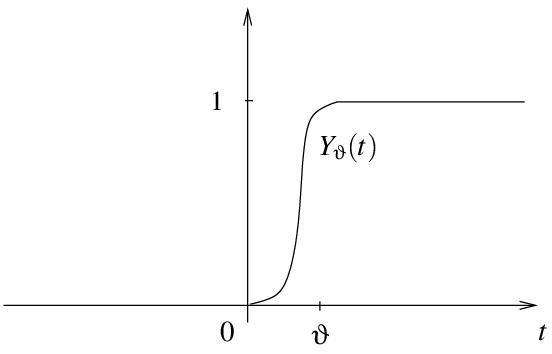}
      \includegraphics[width=0.45\linewidth]{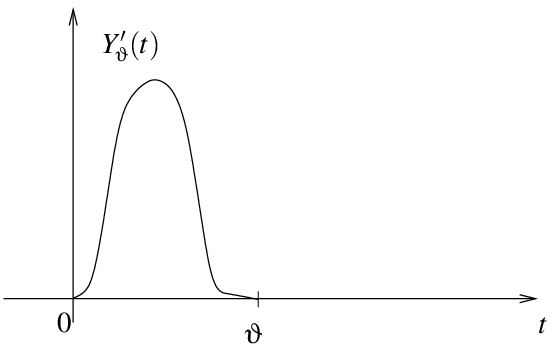}
\caption[Graphes de $Y_\vartheta$ et $Y_\vartheta'$]{Graphs of $Y_\vartheta$, left, and of $Y'_\vartheta$, right.} 
    \end{figure}
    \begin{align}
        Y_\vartheta(t)
        & = 
        \displaystyle
        \int_{-\infty}^t Y_\vartheta' (s) \,\mathrm{d}s\,,
        &
        Y_\vartheta'(t)
        & = 
        \displaystyle
        \frac{1}{\vartheta} \, Y' \left( \frac{t}{\vartheta} \right)\,,
        &
        Y' & \in  \Cc\infty(\reali;\reali)\,,\label{eq:Y}
        \\
        \mathrm{Supp}(Y')
        & \subset 
        \left]0,1\right[\,,
        &
        Y'
        &\geq 
        0\,,
        &
        \displaystyle
        \int_\reali Y'(s) \, \mathrm{d}s
        & = 
        1 \,.\nonumber
    \end{align}

\noindent Let $T_0>0$, $\mathcal{U}=\norma{u}_{\L\infty([0,T_0]\times \reali^N;\reali)}$ and  $M = \norma{\pt_u f}_{\L\infty(\Omega^{\mathcal{U}}_{T_0}; \reali^{N})}$ which is bounded by \textbf{(H1*)}.
  Let us also define, for $\epsilon, \theta, R > 0$, $x_0 \in \reali^N$,
  (see Figure~\ref{fig:chipsi}):
  \begin{equation}
    \label{eq:chipsi}
    \chi(t) = Y_\epsilon(t)-Y_\epsilon(t-T)
    \quad \mbox{ and } \quad
    \psi(t,x) = 1-Y_\theta\left(\norma{x-x_0}-R-M(T_0-t)\right)
    \geq 0,
  \end{equation}
  where we also need the compatibility conditions $T_0 \geq T$ and $M
  \epsilon \leq R + M (T_0 - T)$.  

\begin{figure}[htbp]
    \centering
      \includegraphics[width=0.4\linewidth]{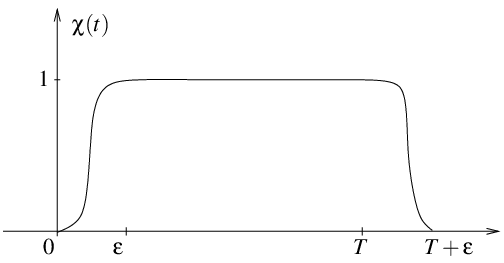}
      \includegraphics[width=0.5\linewidth]{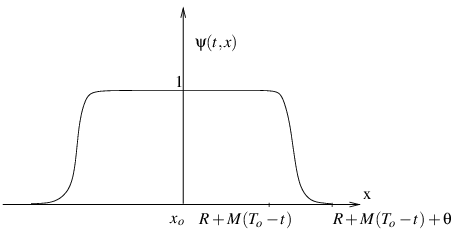}
    \caption[Graphes de $\chi$ et $\psi$]{Graphs of $\chi$, left, and of $\psi$, right.}
    \label{fig:chipsi}
  \end{figure}  
 
  Observe that $\chi \to
  \mathbf{1}_{[0,T]}$ and $\chi' \to \delta_0 - \delta_T$ as
  $\epsilon$ tends to $0$.  On $\chi$ and $\psi$ we use the bounds
  \begin{displaymath}
    \chi
    \leq
    \mathbf{1}_{[0,T+\epsilon]}
    \quad \mbox{ and } \quad
    \mathbf{1}_{B(x_0, R+M(T_0-t))}
    \leq
    \psi
    \leq
    \mathbf{1}_{B(x_0, R+M(T_0-t)+\theta)} \,.
  \end{displaymath}
  In~(\ref{eq:sumtv}), choose $\Phi(t,x) = \chi(t) \, \psi(t,x)$. With
  this choice, we have
  \begin{equation}
    \label{eq:Derivatives}
    \pt_t \Phi 
    = 
    \chi' \, \psi - M \, \chi \,  Y_\theta'
    \quad \mbox{ and } \quad
    \nabla \Phi 
    = -\chi \, Y_\theta' \, \frac{x-x_0}{\norma{x-x_0}} \,.
  \end{equation}
  Setting $\displaystyle B(t,x,u,v) = \modulo{u-v} M +
  \mathrm{sign}(u-v) \left( f(t,x,u) - f(t,x,v) \right) \cdot
  \frac{x-x_0}{\norma{x-x_0}}$, the first line in~(\ref{eq:sumtv})
  becomes
  \begin{eqnarray*}
    \!\!
    & &
    \int_{\rpis} \! \int_{\reali^N} \! \int_{\rpis} \! \int_{\reali^N} \!\!
    \left[
      (u-v) \Psi \, \pt_t \Phi +
      \! \left( f(t,x,u) - f(t,x,v) \right)
      \cdot ( \nabla \Phi) \, \Psi
    \right]\!
    \mathrm{sign}(u-v)
    \mathrm{d}x \, \mathrm{d}t \, \mathrm{d}y \, \mathrm{d}s
    \\
    \!\!
    & = &
    \int_{\rpis} \int_{\reali^N}
    \int_{\rpis} \int_{\reali^N}
    \left(
      \modulo{u-v}\, \chi' \, \psi - B(t,x,u,v) \chi \, Y_\theta'
    \right)
    \, \Psi
    \, \mathrm{d}x \, \mathrm{d}t \, \mathrm{d}y \, \mathrm{d}s
    \\
    \!\!
     &\leq &
    \int_{\rpis} \int_{\reali^N}
    \int_{\rpis} \int_{\reali^N}
    \modulo{u-v} \, \chi' \, \psi \, \Psi
    \, \mathrm{d}x \, \mathrm{d}t \, \mathrm{d}y \, \mathrm{d}s\,,
  \end{eqnarray*}
  since $B(t,x,u,v)$ is positive for all $(t, x, u, v) \in \Omega
  \times \reali$. Due to the above estimate and
  to~(\ref{eq:sumtv}), we have
  \begin{displaymath}
    \begin{array}{llcr}
      \displaystyle
      \!\!\!\!\int_{\rpis} \! \int_{\reali^N} \! \int_{\rpis} \! \int_{\reali^N} \!\!\!\!
      &
      \bigg[
      (u-v) \, \chi' \, \psi \, \Psi
      \\
      &
      \displaystyle
      +
      \left(
        f(s,y,v) - f(s,y,u) - f(t,x,v) + f(t,x,u)
      \right)
      \cdot
      (\nabla \Psi) \, \Phi
      \\
      &
      \displaystyle
      +
      \left(
        F(t,x,u) - F(s,y,v) - \div  f(t,x,v) + \div f(s,y,u)
      \right)
      \phi
      \bigg]
      \\
      &
      \displaystyle
      \times \mathrm{sign}(u-v)
      \, \mathrm{d}x\,\mathrm{d}t \, \mathrm{d}y \, \mathrm{d}s
      & \geq &
      0 .
    \end{array}
  \end{displaymath}
Now, we aim at bounds for each term of this sum. Introduce the
  following notations:
  \begin{eqnarray}
\nonumber
    I
   &  = &
    \int_{\rpis} \int_{\reali^N} \int_{\rpis} \int_{\reali^N}
    \modulo{u-v} \, \chi' \, \psi \, \Psi
    \, \mathrm{d}x\,\mathrm{d}t \, \mathrm{d}y \, \mathrm{d}s\, ,
    \\
\nonumber
    J_x
   &  = &
    \int_{\rpis} \int_{\reali^N} \int_{\rpis} \int_{\reali^N}
    \left( f(t,y,v) - f(t,y,u) + f(t,x,u) - f(t,x,v) \right)\cdot
    \left( \nabla \Psi \right) \, \Phi
    \\*  &&
\nonumber
    \qquad \qquad\qquad\qquad
    \times \,
    \mathrm{sign}(u-v)
    \, \mathrm{d}x\,\mathrm{d}t \, \mathrm{d}y \, \mathrm{d}s\, ,
    \\
\nonumber
    J_t
  &   = &
    \int_{\rpis} \int_{\reali^N} \int_{\rpis} \int_{\reali^N}
    \left( f(s,y,v) - f(s,y,u) + f(t,y,u) - f(t,y,v) \right)\cdot
    \left( \nabla \Psi \right) \, \Phi
    \\*
\nonumber
     &&
    \qquad\qquad\qquad\qquad
    \times \,
    \mathrm{sign}(u-v)
    \, \mathrm{d}x\,\mathrm{d}t \, \mathrm{d}y \, \mathrm{d}s\, ,
    \\
\nonumber
    L_1
  &  = &
    \int_{\rpis} \int_{\reali^N} \! \int_{\rpis} \int_{\reali^N}
    \!
\left[\div f(t,x,v)-\div f(t,x,u)+F(t,y,v)-F(t,y,u)\right]\!\\*
&&\qquad \qquad\qquad\qquad
    \phi \, \mathrm{sign}(u-v)
    \,\mathrm{d}x\,\mathrm{d}t\,\mathrm{d}y\,\mathrm{d}s
    \\
\nonumber    L_2
    & = &
    \int_{\rpis} \int_{\reali^N} \! \int_{\rpis} \int_{\reali^N}
    \left[
      \int_0^1 \! \nabla ( F - \div f) \left(t,rx+(1-r)y,u\right)\cdot (x-y)
      \, \mathrm{d}r
    \right]
    \phi
    \\*
\label{eq:L2}     & &
    \qquad \qquad \qquad \qquad
    \times \mathrm{sign}(u-v)
    \,\mathrm{d}x\,\mathrm{d}t\,\mathrm{d}y\,\mathrm{d}s\,.
    \\
  \nonumber  L_t
    & = &
    \int_{\rpis} \int_{\reali^N} \int_{\rpis} \int_{\reali^N}
    \left(
      F(t,y,v) - F(s,y,v) - \div  f(t,y,u) + \div f(s,y,u)
    \right) \, \varphi
    \\*
\nonumber
    & &
    \qquad\qquad\qquad\qquad
    \times \,
    \mathrm{sign}(u-v)
    \, \mathrm{d}x\,\mathrm{d}t \, \mathrm{d}y \, \mathrm{d}s \,.
  \end{eqnarray}
  Then, the above inequality is rewritten as $I + J_x+J_{t} +
  L_1+L_2+L_{t} \geq 0$.  Choose $\Psi(t,x) = \nu(t)\, \mu(x)$ where, for
  $\pd, \gd > 0$, $\mu \in \Cc\infty (\rpic; \rpic)$
  satisfies~(\ref{eq:mu})--(\ref{eq:Mu}) and
  \begin{equation}
    \label{eq:nu}
    \nu(t)
    =
    \frac{1}{\pd} \, \nu_1 \left( \frac{t}{\pd} \right)
    \,,\quad
    \int_\reali \nu_1(s) \,\mathrm{d} s
    =
    1
    \,,\quad
    \nu_1
    \in
    \Cc\infty(\reali;\rpic)
    \,,\quad
    \mathrm{supp}(\nu_1)
    \subset
    \left]-1,0\right[
    \,.
  \end{equation}

Now, we want to estimate separately $I$, $J_x$, $J_t$, $L_1$, $L_2$ and $L_t$.
Note first that if $x, y\in \reali^N\setminus \{\bigcup_{t\in [0,T_0]} \spt u(t)\} $, the integrand in $J_x$ and $L_1$ vanishes, so denoting 
\begin{equation}\label{eq:KT}
 \mathcal{S}_T(u)= \bigcup_{t\in [0,T_0]} \spt u(t)\,,
\end{equation}
the space of integration of $J_x$ and $L_1$ is in fact $\rpic\times\mathcal{S}_T(u)\times \rpic\times\mathcal{S}_T(u)$.
The main differences with respect to the proof of \cite[Theorem 2.5]{ColomboMercierRosini} are the following:
\begin{itemize}
 \item The $\L\infty$ norm that we took on $\rpic\times\reali^N\times\reali$, 
are now taken on  $\Sigma_{T_0}^u=[0,T_0]\times\mathcal{S}_T(u)\times[-\mathcal{U},\mathcal{U}]$, where $\mathcal{U}=\sup(\norma{u(t)}_{\L\infty(\reali^N)},t\in[0,T_0])$.
\item For $J_t$ and $L_t$, by Dominated Convergence Theorem, we get when $\eta\to 0$
  \begin{align}
  \lim_{\eta\to 0} J_t=\lim_{\eta\to 0} L_t =0\,,
  \end{align}
which avoids the use of time derivatives.
\item The $\L\infty$ norm of  $u$ in $L_2$ is now taken on $[-{U}_t,{U}_t]$ where ${U}_t=\norma{u(t)}_{\L\infty(\reali^N)}$.
\end{itemize}

We do not rewrite the estimates on $I$, $J_x$, $L_1$, $L_2$, that are the same as in  
\cite[Theorem 2.5]{ColomboMercierRosini}, up to the space in the $\L\infty$ norm. See remark  \ref{rem:opt} for precisions on the estimate of $L_2$.

    Letting $\epsilon, \pd, \theta \to 0$ we get
  \begin{eqnarray*}
    \limsup_{\epsilon, \pd, \theta \to 0} I
    & = &
    \int_{\reali^N} \int_{\norma{x-x_0}\leq R+MT_0}
    \modulo{u(0,x)-u(0,y)}
    \, \mu(x-y) \, \mathrm{d}x \, \mathrm{d}y
    \\*
    & &
    -
    \int_{\reali^N} \int_{\norma{x-x_0}\leq R +M(T_0-T)}
    \modulo{u(T,x)-u(T,y)}
    \, \mu(x-y) \, \mathrm{d}x \, \mathrm{d}y\, ,
    \\
    \limsup_{\epsilon, \pd, \theta \to 0} J_x
    & \leq &
    \norma{\nabla\pt_u f }_{\L\infty(\Sigma_{T_0}^u)}
    \int_0^{T}
    \int_{\reali^N}
    \int_{B(x_0, R +M(T_0-t))}
    \norma{x-y} \, \modulo{u(t,x)-u(t,y)}
    \\*
    & &
    \qquad \qquad
    \times
    \norma{\nabla \mu(x - y)}
    \,\mathrm{d}x\,\mathrm{d}y\,\mathrm{d}t\, ,
\\
    \limsup_{\epsilon, \pd, \theta \to 0} J_{t}
    & = &0\, ,
\\
    \limsup_{\epsilon, \pd, \theta \to 0} L_1
    & \leq &
    \left(
      N \norma{\nabla\pt_u f}_{\L\infty(\Sigma_{T_0}^u)}
      +
      \norma{\pt_u F}_{\L\infty(\Sigma_{T_0}^u)}
    \right)
\\*
    & &
    \qquad
    \times
    \int_0^{T} \int_{\reali^N}
    \int_{\norma{x-x_0}\leq R+M(T_0-t)}
    \modulo{u(t,x)-u(t,y)} \, \mu(x-y)
    \,\mathrm{d}x\,\mathrm{d}y\,\mathrm{d}t\, ,\\
 \limsup_{\epsilon, \pd, \theta \to 0} L_{2}
    & = & 
    \gd  M_1
    \int_0^{T} \int_{\reali^N}
    \norma{\nabla(F - \div f) (t,y,\cdot)}_{\L\infty([-{U}_t,{U}_t])}
    \, \mathrm{d}y\,\mathrm{d}t\,  ,
    \\
    \limsup_{\epsilon, \pd, \theta \to 0} L_{t}
    & = &
    0 \,,
  \end{eqnarray*}
 where
  \begin{equation}
    \label{eq:M1}
    M_1
    =
    \int_{\reali^N} \norma{x} \,
    \mu_1\left(\norma{x}\right) \, \mathrm{d}x \,.
  \end{equation}
Above, the right hand sides are bounded thanks to \textbf{(H2*)}.  

  Collating all the obtained results and using the equality,
  $\norma{\nabla \mu (x)} = - \frac{1}{ \gd^{N+1}} \mu_1'\left(
    \frac{\norma{x}}{\gd} \right)$
  \begin{equation}
    \label{inegalite}
    \begin{array}{rcl}
      & &
      \displaystyle
      \int_{\reali^N} \int_{\norma{x-x_0}\leq R +M(T_0-T)}
      \modulo{u(T,x)-u(T,y)}
      \, \frac{1}{\gd^N}
      \mu_1\left(\frac{\norma{x-y}}{\gd}\right)
      \, \mathrm{d}x \, \mathrm{d}y
      \\[10pt]
      & \leq &
      \displaystyle
      \int_{\reali^N} \int_{\norma{x-x_0}\leq R +M T_0}
      \modulo{u(0,x)-u(0,y)}
      \, \frac{1}{\gd^N}
      \mu_1\left(\frac{\norma{x-y}}{\gd}\right)
      \, \mathrm{d}x \, \mathrm{d}y
      \\[10pt]
      & &
      \displaystyle
      -
      \norma{\nabla\pt_u f }_{\L\infty(\Sigma_{T_0}^u)}
      \int_0^{T} \int_{\reali^N} \int_{\norma{x-x_0}\leq R +M(T_0-t)} \!\!\!
      \modulo{u(t,x)-u(t,y)}
      \\[10pt]
      & &
      \displaystyle
      \qquad \qquad \qquad
      \times
      \frac{1}{\gd^{N+1}} \, \mu_1'\left( \frac{\norma{x-y}}{\gd} \right)
      \, \norma{x-y}
      \,\mathrm{d}x\,\mathrm{d}y\,\mathrm{d}t
      \\[10pt]
      & &
      \displaystyle
      +
      \big(
        N \norma{\nabla\pt_u f}_{\L\infty(\Sigma_{T_0}^u)}
        +
        \norma{\pt_u F}_{\L\infty(\Sigma_{T_0}^u)}
      \big) \!
      \int_0^{T} \! \int_{\reali^N} \! \int_{\norma{x-x_0}\leq R
        +M(T_0-t)} \!
      \modulo{u(t,x)-u(t,y)}
      \\[10pt]
      & &
      \displaystyle
      \qquad \qquad \qquad
      \times
      \frac{1}{\gd^N}
      \mu_1\left(\frac{\norma{x-y}}{\gd}\right)
      \,\mathrm{d}x\,\mathrm{d}y\,\mathrm{d}t
      \\[10pt]
      & &
      \displaystyle
      +
      \gd  M_1
      \int_0^{T}\int_{\reali^N} 
      \norma{\nabla(F - \div f)(t, y,\cdot)}_{\L\infty([-{U}_t,{U}_t])} \,
      \mathrm{d}y\, \mathrm{d}t \,.
    \end{array}
  \end{equation}

  If $\norma{\nabla\pt_u f }_{\L\infty(\Sigma_{T_0}^u)} =\norma{\pt_u F}_{\L\infty(\Sigma_{T_0}^u)}=
  0$ and under the present assumption that $u_0 \in \C1
  (\reali^N;\reali)$, using Proposition~\ref{prop:normtv2},
  (\ref{eq:Cst0}) and~(\ref{eq:M1}), we directly obtain that
  \begin{equation}
    \label{eq:CK}
    \tv(u(T))
    \leq
    \tv(u_0)
    +
    \frac{M_1}{C_1}  \,
    \int_0^{T} \int_{\reali^N} 
    \norma{\nabla (F - \div f)(t, y,\cdot)}_{\L\infty([-{U}_t,{U}_t])}
    \mathrm{d}y\, \mathrm{d}t \,.
  \end{equation}
  The same procedure at the end of this proof allows to
  extend~(\ref{eq:CK}) to more general initial data, providing an
  estimate of $\tv \left( u(t) \right)$ in the situation studied
  in~\cite{bouchutperthame}.

  Now, it remains to treat the case when $\norma{\nabla \pt_u f}_{\L\infty(\Sigma_{T_0}^u)}
  \neq 0$. As in \cite[Theorem 2.5]{ColomboMercierRosini}, a direct use of Gronwall lemma is not possible, but we can first obtain an estimate of the function:
  \begin{displaymath}
    \mathcal{F}(T,\gd)
    =
    \int_0^T \int_{\reali^N} \int_{\norma{x-x_0} \leq R+M(T_0-t)}
    \modulo{u(t,x)-u(t,x-z)}
    \, \frac{1}{\gd^N}
    \, \mu_1\left(\frac{\norma{z}}{\gd}\right)
    \, \mathrm{d}x \, \mathrm{d}z \, \mathrm{d}t\,.
  \end{displaymath}
  Indeed, we get that if $T$ is such that 
\[
  T
    <
    \frac{1}{ (1+2N) \norma{\nabla\pt_u f}_{\L\infty(\Sigma_{T_0}^u)}
        + \norma{\pt_u F}_{\L\infty(\Sigma_{T_0}^u)}} \,,
\]
 then  we obtain, with $\alpha = \left( 2 N \norma{\nabla\pt_u
      f}_{\L\infty}+\norma{\pt_u F}_{\L\infty} - \frac{1}{T} \right)
  \left(\norma{\nabla\pt_u f}_{\L\infty(\Sigma_{T_0}^u)} \right)^{-1}<-1$, 
  \begin{equation}
    \label{eq:FDelta}
    \frac{1}{\gd} \mathcal{F}(T',\gd)
    \leq
    \frac{1}{-\alpha-1} \left( M_1\tv{(u_0)} +  C(T') \right)
    \frac{1}{\norma{\nabla\pt_u f}_{\L\infty(\Sigma_{T_0}^u)}} \,.
  \end{equation}
  Furthermore, by~(\ref{eq:mu}) and~(\ref{eq:Mu}) there exists a
  constant $Q > 0$ such that for all $z \in \reali^N$
  \begin{equation}
    \label{eq:BoundMu}
    -\mu_1'(\norma{z})
    \leq
    Q \, \mu_1\left(\frac{\norma{z}}{2}\right) \,.
  \end{equation}
  Divide both sides in~(\ref{inegalite}) by $\gd$, rewrite them
  using~(\ref{eq:FDelta}), (\ref{eq:BoundMu}), apply~(\ref{eq:afp})
  and obtain
  \begin{eqnarray*}
    & &
    \frac{1}{\gd}
    \int_{\reali^N} \int_{\norma{x-x_0}\leq R +M(T_0-T)}
    \modulo{u(T,x) - u(T,y)}
    \, \frac{1}{\gd^N}
    \mu_1\left(\frac{\norma{x-y}}{\gd}\right)
    \, \mathrm{d}x \, \mathrm{d}y
    \\
    & \leq &
    M_1 \tv(u_0)
    +
    \frac{ \mathcal{F}(T,\gd)}{\gd}
    \left(
      2 N \norma{\nabla\pt_u f}_{\L\infty(\Sigma_{T_0}^u)}+\norma{\pt_u F}_{\L\infty(\Sigma_{T_0}^u)}
    \right)
    \\
    & &
+
    \frac{\mathcal{F}(T,2\gd)}{2\gd}  2^{N+2}  \,Q\,
    \norma{\nabla\pt_u f}_{\L\infty(\Sigma_{T_0}^u)}\\
&&
+
    M_1
    \int_0^{T} \int_{\reali^N} 
    \norma{\nabla(F - \div f)(t, y,\cdot)}_{\L\infty([-{U}_t,{U}_t])}
    \mathrm{d}y\, \mathrm{d}t\, .
  \end{eqnarray*}
  An application of~(\ref{eq:FDelta}) yields an estimate of the type
  \begin{equation}
    \label{ineq1}
    \frac{1}{\gd} \int_{\reali^N} \int_{B(x_0,R+M(T_0-T))}
    \modulo{u(T,x)-u(T,x-z)} \, \mu(z)
    \, \mathrm{d}x \, \mathrm{d}z
    \leq \check C \,,
  \end{equation}
  where the positive constant $\check C$ is independent from $R$ and
  $\gd$. Applying Proposition~\ref{prop:normtv2} we obtain that
  $u(t)\in \BV(\reali^N;\reali)$ for $t \in \left[ 0, 2 ~T_1\right[$,
  where
  \begin{equation}
    \label{eq:T1}
    T_1
    =
    \frac{1}{2\left( (1+2N) \norma{\nabla\pt_u f}_{\L\infty(\Sigma_{T_0}^u)}
        + \norma{\pt_u F}_{\L\infty(\Sigma_{T_0}^u)}\right)} \,.
  \end{equation}

  \bigskip

  The next step is to obtain a general estimate of the $\tv$ norm. The
  starting point is~(\ref{inegalite}). Recall the
  definitions~(\ref{eq:M1}) of $M_1$ and~(\ref{eq:T1}) of
  $T_1$. Moreover, by integration by part we obtain
  \begin{displaymath}
    \int_{\reali^N} \modulo{z_1} \norma{z} \, \mu_1'(\norma{z}) \, \mathrm{d}z
    =
    - (N+1) \, C_1 \,.
  \end{displaymath}

  The following step is \emph{not} similar to \cite[proof of theorem 2.5]{ColomboMercierRosini}: we divide both terms in~(\ref{inegalite}) by $\gd$,
  apply~(\ref{eq:limm}) on the first, second and third terms  in the right hand side, with $\rho_1=\mu_1\geq 0$ in the second and third case, and with $\rho_1=-\mu_1'\geq 0$ in the second case. We obtain for
  all $T \in [0,T_1]$ with $T_1 < T_0$
  \begin{eqnarray*}
    \tv \left( u(T) \right)
    & \leq &
    \tv(u_0)
    +
    \left(
      (2N+1) \norma{\nabla\pt_u f}_{\L\infty(\Sigma_{T_0}^u)}
      +
      \norma{\pt_u F}_{\L\infty(\Sigma_{T_0}^u)}
    \right)
    \int_0^T \tv \left( u(t) \right) \, \mathrm{d}t
    \\
    & &
    +
    \frac{M_1}{C_1}  \int_0^{T} \int_{\reali^N}
    \norma{\nabla(F - \div f)(t,x,\cdot)}_{\L\infty([-{U}_t,{U}_t])} 
    \mathrm{d}x \, \mathrm{d}t\,.
  \end{eqnarray*}
  Next, an application of the Gronwall Lemma shows that $\tv \left( u(t)
  \right)$ is bounded on $[0,T_1]$
  \begin{equation}
    \label{eq:TV}
    \tv \left( u(T) \right)
    \leq
    e^{\kappa^*_0 T} \, \tv(u_0)
    +
    \frac{M_1}{C_1} \int_0^T e^{\kappa^*_0(T-t)}
    \int_{\reali^N} 
    \norma{\nabla(F-\div f)(t,x,\cdot)}_{\L\infty([-{U}_t,{U}_t])} 
    \, \mathrm{d}x\, \mathrm{d}t
  \end{equation}
  for $T \in [0,T_1]$, $M_1, C_1$ as in~(\ref{eq:M1}), (\ref{eq:Cst0})
  and $\kappa^*_0 =  (2N+1) \norma{\nabla\pt_u f}_{\L\infty(\Sigma_{T_0}^u)} +
  \norma{\pt_u F}_{\L\infty(\Sigma_{T_0}^u)} $.

Now, it remains only to relax assumption on the regularity of $u_0$ and to note that the bound (\ref{eq:TV}) is additive in time. These steps are the same as in \cite[Theorem 2.5]{ColomboMercierRosini}, so we do not write them.
\end{proofof}

\begin{remark}\label{rem:opt}
 The constant  $NW_N$ in front of  $\int_0^T\int_{\reali^N}\norma{\nabla(F-\mathrm{div}f)(t,x,\cdot)}_{\L\infty([-{U}_t,{U}_t])}\d{x}\,\d{t}$ in Theorem   \ref{teo:tv}
comes from the estimate of the term $L_2$ defined by (\ref{eq:L2}).

We have indeed
\begin{eqnarray*}
L_2
&\leq &
\int_{\rpis}\int_{\reali^N} \int_{\rpis}\int_{\reali^N}\int_0^1 \modulo{ \nabla (F-\mathrm{div}f) (t,x-\gd (1-r) z, u)\cdot (\gd z)}\\
&&\qquad\qquad \qquad\times
\chi \psi \mu_1(\norma{z}) \nu \mathrm{d}r\,\mathrm{d}x\,\mathrm{d}t\, \mathrm{d}z\,\mathrm{d}s\\
&\leq&\gd \int_0^{T+\varepsilon} \int_{B(x_0, R+M(T_0-t) +\theta)} \int_{\reali^N}\int_0^1 \modulo{ \nabla (F-\mathrm{div}f) (t,x-\gd (1-r) z, u)\cdot ( z)}\\
&&\qquad\qquad\qquad\times\mu_1(\norma{z}) \mathrm{d}r\, \mathrm{d}z\,\mathrm{d}x\,\mathrm{d}t\\
&\leq&\gd \int_0^{T+\varepsilon} \int_0^1 \int_{B(x_0, R+M(T_0-t) +\theta+\gd)}\!\! \int_{\reali^N} \modulo{ \nabla (F-\mathrm{div}f) (t,x', u(t,x'+\gd (1-r)z))\cdot  z}\\
&&\qquad\qquad\qquad\times\mu_1(\norma{z})  \mathrm{d}z\,\mathrm{d}x'\,\mathrm{d}r\,\mathrm{d}t
\end{eqnarray*}
If  $F-\div f $ does not depend on $u$,  then, with the same computations as in the proof of Proposition \ref{prop:normtv2}, considering $z\mapsto  \nabla (F-\mathrm{div}f) (t,x')\cdot  z$  as a linear application, we get:
\begin{eqnarray*}
L_2
&\leq&\gd \int_0^{T+\varepsilon}  \int_{B(x_0, R+M(T_0-t) +\theta+\gd)}\modulo{ \nabla (F-\mathrm{div}f) (t,x') }\,\mathrm{d}x'\,\mathrm{d}t
\int_{\reali^N}  \modulo{z_1} \mu_1(\norma{z})  \mathrm{d}z\, ,
\end{eqnarray*}
which allows us to get rid of the constant $N W_N$ into the bound of $L_2$.

However, in the general case, because of the dependence of $u$ in $z$, we are led to take the supremum of $u(t)$. We obtain the following: 
\begin{align*}
L_2&\leq \gd \int_0^{T+\varepsilon}\!\!  \int_{B(x_0, R+M(T_0-t) +\theta+\gd)} \!\!\int_{\reali^N}\sup_{y\in \reali^N} \modulo{ \nabla (F-\mathrm{div}f) (t,x', u(t,y))\cdot z}\mu_1(\norma{z})  \mathrm{d}z\mathrm{d}x'\mathrm{d}t.
\end{align*}
We can no longer do the same computations as in the proof of  Proposition \ref{prop:normtv2}.  Indeed, it is not allowed to permute  $\sup$ and $\int_{\reali^N}$, consequently, if we want to isolate the variable $z$ from the other variables, we use the Cauchy-Schwartz inequality to obtain:
\begin{align*}
L_2\leq& \gd \int_0^{T+\varepsilon}  \int_{B(x_0, R+M(T_0-t) +\theta+\gd)}\sup_{y\in \reali^N}\norma{ \nabla (F-\mathrm{div}f) (t,x', u(t,y)) }\,\mathrm{d}x'\,\mathrm{d}t\\
&\times 
\int_{\reali^N}  \norma{z} \mu_1(\norma{z})  \mathrm{d}z\, .
\end{align*}
The constant $NW_N$ appears here when we divide by $C_1= \int_{\reali^N}  \modulo{z_1} \mu_1(\norma{z})  \mathrm{d}z$, since, by  Lemma \ref{lem:mu}, $\frac{1}{C_1}\int_{\reali^N}  \norma{z} \mu_1(\norma{z})  \mathrm{d}z =NW_N$.

In the general case, we were consequently not able, using this method, to erase the constant  $NW_N$ on the right hand side of   \Ref{result}.
\end{remark}

\begin{proofof}{Corollary \ref{cor:lip}}
 This is the same argument as in \cite[Theorem 4.3.1]{Dafermos}, the flow and the source depending here on the three variables $t$, $x$ and $u$.  
 
The weak entropy solution $u$ of (\ref{eq:probv}) is also a weak solution. Consequently, for any $\phi\in \Cc\infty([0,T]\times\reali^N,\reali)$ such that $\modulo{\phi}\leq 1$, for any $t\in [0,T]$, we have
\begin{align*}
&\int_t^T\int_{\reali^N} \left(u\pt_t\phi+f(\tau,x,u)\cdot\nabla \phi \right)\d{x}\d{\tau}+\int_{\reali^N}u(t,x)\phi(t, x)\d{x}\\
 = &  -\int_t^T\int_{\reali^N} F(\tau, x,u)\phi(\tau, x)\d{x}\d{\tau}\,.
\end{align*}
Let $s,t\in [0,T]$. Then, with $\phi(t,x)=\psi(x)$, we obtain
\begin{align*}
&\int_s^t\int_{\reali^N} f(\tau,x,u)\cdot\nabla \psi\d{x}\d{\tau}+\int_{\reali^N}(u(s,x)-u(t,x))\psi(x)\d{x}\\ 
 =& -\int_s^t\int_{\reali^N} F(\tau, x,u)\psi( x)\d{x}\d{\tau}\,.
\end{align*}
That is to say
\begin{align*}
&\int_{\reali^N}(u(s,x)-u(t,x))\psi(x)\d{x}\\ 
 =& -\int_s^t\int_{\reali^N} (F(\tau, x,u)-\div f(\tau,x,u))\psi(x)\d{x}\d{\tau}\\
 & -\int_s^t\int_{\reali^N}(\div f(\tau,x,u)\psi(x)+ f(\tau,x,u)\cdot\nabla \psi)\d{x}\d{\tau}\,.
\end{align*}
By a regularization process, we prove that 
\begin{align*}
&\modulo{\int_s^t\int_{\reali^N}(\div f(\tau,x,u)\psi(x)+ f(\tau,x,u)\cdot\nabla \psi)\d{x}\d{\tau}}\\
\leq &\modulo{s-t} \norma{\pt_u f}_{\L\infty(\Sigma_{T^u})} \sup_{[0,T]}\tv(u(t))\,.
\end{align*}
Taking the supremum over all $\psi \in \Cc\infty(\reali^N,\reali)$ such that $\modulo{\psi}\leq 1$, we obtain
\begin{align*}
\norma{u(t)-u(s)}_{\L1(\reali^N)} \leq & \modulo{\int_s^t \int_{\reali^N} \norma{F-\div f (\tau,x,\cdot)}_{\L\infty([-U_\tau,U_\tau])} \d{x}\d{\tau} }\\
&+ \modulo{s-t}\norma{\pt_u f}_{\L\infty(\Sigma_T^u])} \sup_{[0,T]}\tv(u(t))\,.
\end{align*}
\end{proofof}

\section{Proof of the stability estimates}\label{sec:proofcomparison}

We give now the proof of Theorems  \ref{teo:estimates} and \ref{thm:stab2}. 
We prove first prove the following lemma.
\begin{lemma}\label{lem:main}
Let $(f, F)$, $(g,G)$ satisfy~$\mathbf{(H1^*)}$, $(f,F)$ satisfy
  $\mathbf{(H2^*)}$ and $(f-g,F-G)$ satisfy~$\mathbf{(H3^*)}$. Let $u_0, v_0
  \in \L\infty\cap\L1\cap\BV(\reali^N; \reali)$. We denote $u$ and $v$ the solutions associated respectively to the initial conditions 
$u_0$ and $v_0$. Let $T>0$.
  Then, using the same notation as in (\ref{eq:def})--( \ref{eq:KM}), for any $R >0$ and $x_0 \in \reali^N$, the following
  estimate holds:
  \begin{eqnarray*}
      & &
    \int_{B(x_0,R+M(T_0-T))} \modulo{u(T,x)-v(T,x)} \,
    \mathrm{d}x
    \\
    & \leq & 
    \int_{B(x_0, R +MT_0)} \modulo{u(0,x)-v(0,x)}
    \,\mathrm{d}x
    \\
    & &
    +(\norma{\pt_u F}_{\L\infty( \Sigma_{T_0}^{u,v})}+\norma{\pt_u \div(g-f)}_{\L\infty(\Sigma_{T_0}^{u,v})})
    \int_0^{T} \!\! \int_{B(x_0,R+M(T_0-t))} 
    \modulo{v(t,x)-u(t,x)}
    \, \mathrm{d}x \, \mathrm{d}t
    \\
    & &
    +  \biggl[
    \int_0^T  \norma{\pt_u(f-g)(t)}_{\L{\infty}( \mathcal{S}_{T_0}(u)\times[-U_t,U_t] )} \tv(u(t))\,  \mathrm{d}t
    \\
    & &
    \qquad
    +
    \int_0^T \int_{B(x_0,R+M(T_0-t))}\!
    \norma{\left((F-G)-\div(f-g)\right)(t, y, \cdot)}_{\L{\infty}([-{V}_t, {V}_t])}
    \mathrm{d}y\, \mathrm{d}t
    \biggr]\,.
  \end{eqnarray*}
\end{lemma}

The beginning of this proof is similar, up to (\ref{eq:I}), to the proof of Theorem 2.6 in \cite{ColomboMercierRosini}. We rewrite it in order to be complete and clear.

\begin{proofof}{Lemma~\ref{lem:main}}

  Let $\Phi\in \Cc\infty( \rpis \times \reali^N; \rpic)$, $\Psi \in
  \Cc\infty (\reali\times \reali^N; \rpic)$, and set
  $\varphi(t,x,s,y)=\Phi(t,x) \Psi(t-s,x-y)$ as in~(\ref{eq:phi}).

By Kru\v zkov Theorem \cite[Theorem 5 \&  Section  5, Remark 4]{Kruzkov}, the set of hypotheses \textbf{(H1*)} gives us existence and uniqueness of a weak entropy solution for any initial condition in $\L\infty\cap\L1(\reali^N;\reali)$. Let  $u$ be the Kru\v zkov solution associated to $u_0$ and $v$ be the Kru\v zkov solution associated to $v_0$. 
  By definition of Kru\v zkov weak entropy solution, we have for all $ l\in \reali$, for all $
  (t,x) \in \rpis \times \reali^N$
  \begin{equation}
    \label{eq:u}
    \begin{array}{r}
      \displaystyle
      \!\!\int_{\rpis} \!\! \int_{\reali^N} \!\!
      \left[
        (u-l) \, \pt_s \phi
        +
        \left( f(s,y,u) - f(s,y,l) \right) \cdot \nabla_y \phi
        +
        \left(F(s,y,u)-\div f(s,y,l)\right) \phi
      \right]
      \\
      \times \mathrm{sign}(u-l) \,
      \mathrm{d}y \, \mathrm{d}s
      \geq
      0
    \end{array}
  \end{equation}
  and for all $ k\in \reali$, for all $ (s,y)\in \rpis \times \reali^N$
  \begin{equation}
    \label{eq:v}
    \begin{array}{r}
      \displaystyle
      \!\!\int_{\rpis} \!\! \int_{\reali^N} \!\!
      \left[
        (v-k) \, \pt_t \phi
        +
        \left( g(t,x,v) - g(t,x,k) \right) \cdot \nabla_x \phi
        +
        \left(G(t,x,v) - \div g(t,x,k)\right)\phi
      \right]
      \\
      \times \mathrm{sign}(v-k) \,
      \mathrm{d}x \, \mathrm{d}t
      \geq
      0 .
    \end{array}
  \end{equation}
  Choose $k=u(s,y)$ in~(\ref{eq:v}) and integrate with respect to
  $(s,y)$. Analogously, take $l=v(t,x)$ in~(\ref{eq:u}) and integrate
  with respect to $(t,x)$. By summing the obtained equations, we get,
  denoting $u=u(s,y)$ and $v=v(t,x)$:
  \begin{equation}
    \label{eq:sum}
    \!\!\!
    \begin{array}{rl@{\;}c@{\;}l}
      \displaystyle 
      \int_{\rpis} \int_{\reali^N} \int_{\rpis} \int_{\reali^N} 
      &
      \displaystyle
      \bigg[
      (u-v) \Psi\pt_t \Phi 
      +
      \left(g(t,x,u)-g(t,x,v)\right) \cdot \left(\nabla\Phi\right) \Psi
      \\
      &
      +
      \left( g(t,x,u) - g(t,x,v) - f(s,y,u) + f(s,y,v) \right)
      \cdot
      (\nabla\Psi) \Phi
      \\
      &
      + 
      \left( 
        F(s,y,u) - G(t,x,v) + \div  g(t,x,u) -\div f(s,y,v)
      \right)
      \varphi 
      \bigg]
      \\
      &
      \times \mathrm{sign}(u-v)
      \, \mathrm{d}x\,\mathrm{d}t\,\mathrm{d}y\,\mathrm{d}s 
      & \geq & 
      0 \,.
    \end{array}
  \end{equation}
We introduce a family of functions $\{Y_\vartheta\}_{\vartheta>0}$ as in~(\ref{eq:Y}). Let $T_0>0$ and  denote  $M = \norma{\pt_u
    g}_{\L\infty(\Omega^{\mathcal{V}}_{T_0};\reali^{N})}$ with $\mathcal{V}=\max(\norma{u}_{\L\infty([0,T_0]\times\reali^N)}, \norma{v}_{\L\infty([0,T_0]\times\reali^N)})$. We also define $\chi, \psi$ as
  in~(\ref{eq:chipsi}), for $\epsilon, \theta, R > 0$, $x_0 \in
  \reali^N$ (see also Figure~\ref{fig:chipsi}). Note that with
  these choices, equalities~(\ref{eq:Derivatives}) still hold. Note
  that here the definition of the test function $\phi$ is essentially
  the same as in the preceding proof; the only change is the
  definition of the constant $M$, which is now defined with reference
  to $g$. We also introduce as above the function $\displaystyle
  B(t,x,u,v) = M \modulo{u-v} + \mathrm{sign}(u-v) \left( g(t,x,u) -
    g(t,x,v) \right) \cdot \frac{x-x_0}{\norma{x-x_0}}$ that is
  positive for all $(t,x,u,v) \in \Omega \times \reali$, and we
  have:
  \begin{align*}
     & 
    \int_{\rpis} \! \int_{\reali^N} \! \int_{\rpis} \! \int_{\reali^N}
    \left[ 
      (u-v) \pt_t \Phi 
      + 
      \left( g(t,x,u) - g(t,x,v) \right) \cdot \nabla \Phi
    \right]
    \Psi
    \, \mathrm{sign}(u-v) \, \mathrm{d}x \, \mathrm{d}t \, \mathrm{d}y
    \, \mathrm{d}s 
    \\
     \leq &
    \int_{\rpis} \! \int_{\reali^N} \! \int_{\rpis} \! \int_{\reali^N}
    \left[
      \modulo{u-v} \chi' \psi
      -
      B(t,x,u,v) \chi Y_\theta'
    \right]
    \Psi
    \, \mathrm{d}x \, \mathrm{d}t \, \mathrm{d}y \, \mathrm{d}s
    \\
     \leq &
    \int_{\rpis} \! \int_{\reali^N} \! \int_{\rpis} \! \int_{\reali^N}
    \modulo{u-v} \, \chi' \, \psi \, \Psi
    \, \mathrm{d}x \, \mathrm{d}t \, \mathrm{d}y \, \mathrm{d}s \,.
  \end{align*}
  Due to the above estimate and~(\ref{eq:sum}), we obtain
  \begin{displaymath}
    \begin{array}{llcr}
      \displaystyle
      \int_{\rpis} \int_{\reali^N} \int_{\rpis} \int_{\reali^N} 
      &
      \bigg[(u-v) \chi'\psi\Psi 
      & &
      \\
      &
      \displaystyle
      +
      \left( g(t,x,u) - g(t,x,v) - f(s,y,u) + f(s,y,v) \right)
      \cdot
      (\nabla\Psi) \Phi 
      & &
      \\
      &
      \displaystyle 
      + 
      \left( 
        F(s,y,u) - G(t,x,v) + \div  g(t,x,u) - \div f(s,y,v)
      \right)
      \varphi
      \bigg]
      & &
      \\
      &
      \displaystyle
      \times \mathrm{sign}(u-v)
      \,\mathrm{d}x \,\mathrm{d}t\,\mathrm{d}y\,\mathrm{d}s 
      & \geq & 
      0 \,,
    \end{array}
  \end{displaymath}
  i.e.~$I + J_x + J_t + K + L_x + L_t \geq 0$, where
  \begin{align}
    \label{eq:I}
    I
     = &
    \int_{\rpis} \int_{\reali^N} \int_{\rpis} \int_{\reali^N} 
    \modulo{u-v} \chi'\psi\Psi 
    \,\mathrm{d}x\,\mathrm{d}t\,\mathrm{d}y\,\mathrm{d}s\, , 
    \\
   \nonumber
    J_x
     = & 
    \int_{\rpis} \int_{\reali^N} \int_{\rpis} \int_{\reali^N}
   \big[ \left(g(t,x,u)-g(t,x,v)+g(t,y,v)-g(t,y,u) \right)
    \cdot (\nabla \Psi) \Phi 
    \\
   \label{eq:Jx}
     & 
    \qquad \qquad \qquad \qquad
    +(\div g(t,x,u)-\div g(t,x,v))\phi\big] \mathrm{sign}(u-v)
    \, \mathrm{d}x \, \mathrm{d}t \, \mathrm{d}y \, \mathrm{d}s \, ,
    \\
    \nonumber
    J_t
     = &
    \int_{\rpis} \int_{\reali^N} \int_{\rpis} \int_{\reali^N}
    \big[\left( f(s,y,v) - f(s,y,u) + f(t,y,u) - f(t,y,v) \right)
    \cdot (\nabla \Psi) \Phi
    \\
    \nonumber
     &
    \qquad \qquad \qquad \qquad +(\div f(t,y,v)-\div f (s,y,v))\big]
    \times \mathrm{sign}(u-v)
    \,\mathrm{d}x\,\mathrm{d}t\,\mathrm{d}y\,\mathrm{d}s\, ,
    \\
    \label{eq:K}
    K
     = &
    \int_{\rpis} \int_{\reali^N} \int_{\rpis} \int_{\reali^N}
    \left( (g-f)(t,y,u) - (g-f)(t,y,v) \right)
    \cdot (\nabla\Psi)  \Phi
    \\
    \nonumber
     &
    \qquad \qquad \qquad \qquad
    \times 
    \mathrm{sign}(u-v)
    \,\mathrm{d}x\,\mathrm{d}t\,\mathrm{d}y\,\mathrm{d}s\,
    ,
    \\
    \nonumber
    L_x
     = &
    \int_{\rpis} \int_{\reali^N} \int_{\rpis} \int_{\reali^N}
    \left( 
      F(t,y,u) - G(t,x,v) + \div g(t,x,v) - \div f(t,y,v)
    \right)
    \varphi 
    \\
    \label{eq:Lx}
     &
    \qquad \qquad \qquad \qquad
    \times \mathrm{sign}(u-v)
    \,\mathrm{d}x\,\mathrm{d}t\,\mathrm{d}y\,\mathrm{d}s\, ,
    \\
    \nonumber
    L_t
    = &
    \int_{\rpis} \int_{\reali^N} \int_{\rpis} \int_{\reali^N}
    \left( 
      F(s,y,u) - F(t,y,u) 
    \right)
    \varphi 
    \\
    \nonumber
     &
    \qquad \qquad \qquad \qquad
    \times \mathrm{sign}(u-v) 
    \,\mathrm{d}x\,\mathrm{d}t\,\mathrm{d}y\,\mathrm{d}s
    \,.
  \end{align}
  Now, we choose $\Psi(t,x) = \nu(t) \, \mu(x)$ as
  in~(\ref{eq:nu}),~(\ref{eq:mu}),~(\ref{eq:Mu}).  Let us estimate each of these integrals separately.

\paragraph{Estimate on $I$.}  
  The estimate on $I$ is the same as in the proof of \cite[Theorem 2.6]{ColomboMercierRosini}: thanks to
  Lemma~\ref{lem:estI},  we obtain
  \begin{eqnarray}
    \label{eq:estI}
    \limsup_{\epsilon, \pd,\gd \to 0} I 
    & \leq & 
    \int_{\norma{x-x_0} \leq R + MT_0+\theta} \modulo{u(0,x)-v(0,x)} \,\mathrm{d}x\\
    \nonumber
    & &
    -
    \int_{\norma{x-x_0}\leq R +M(T_0-T)} \modulo{u(T,x)-v(T,x)} \, \mathrm{d}x\, .
    \end{eqnarray}

\paragraph{Estimate on $J_x$.}  
For $ J_x$, we derive a new estimate with respect to  \cite[Theorem 2.6]{ColomboMercierRosini}. Indeed,   as $g $ is $\C2$ in space, we can use the following Taylor expansion:
\begin{align*}
g(t,y,v)=&g(t,x,v)+ \nabla g (t,x,v)\cdot(y-x) \\
&+\int_0^1 (1-r) \nabla^2 g(t,r y+(1-r)x,v)\d{r} \cdot(y-x)^2\,,\\
g(t,y,u)=&g(t,x,u)+ \nabla g (t,y,u)\cdot(y-x) \\
&+\int_0^1 (1-r) \nabla^2 g(t,r y+(1-r)x,u)\d{r} \cdot(y-x)^2\,.
\end{align*}
Besides, we note that
\begin{eqnarray*}
&&\left(\nabla g (t,x,v)\cdot(y-x)\right)\cdot \nabla \mu(x-y) -\div g(t,x,v) \,\mu(x-y)\\
&=&\sum_{i,j} \pt_{j} g_i(t,x,v) \,(y_j-x_j)\, \pt_i\mu(x-y) -\sum_i\pt_i g_i(t,x,v)\, \mu(x-y)\\
&=& -\sum_{i,j}\pt_j g_i(t,x,v) \,\pt_i\left(z_j\mu(z)\right)|_{z=x-y} \\
&=&-\nabla g(t,x,v)\cdot\nabla((x-y)\mu(x-y))
\end{eqnarray*}
In the same way, we have
\begin{eqnarray*}
&&\left(\nabla g(t,x,u)\cdot(x-y)\right)\nabla \mu(x-y) +\div g(t,x,u)\mu(x-y)\\
&=&\nabla g(t,x,u)\cdot\nabla((x-y)\mu)
\end{eqnarray*}
so that finally
\begin{align*}
&\left(g(t,y,v)-g(t,x,v)+g(t,x,u)-g(t,y,u)\right)\nabla \mu +\left(\div g(t,x,u)-\div g(t,x,v)\right)\mu(x-y)\\
&=\left(\nabla g(t,x,u)-\nabla g(t,x,v)\right)\cdot\nabla((x-y)\mu)\\
&\quad+ \left[\int_0^1 (1-r) \big(\nabla^2 g(t,r y+(1-r)x,u)-\nabla^2 g(t,ry +(1-r)x, v)\big)\d{r} \cdot(x-y)^2\right]\cdot\nabla \mu 
\end{align*}
After  a change of variable, we obtain 
\begin{align*}
&\lim_{\varepsilon,\eta,\theta\to 0}J_x\\
=&\int_0^T\!\!\int_{B(x_0,R+M(T_0-t))} \int_{\reali^N}\!\!\Big\{ \big(\nabla g(t,x,u(t,x-\lambda z))-\nabla g(t,x,v(t,x))\big)\\
&\qquad\qquad\qquad\qquad\qquad\cdot\nabla(z\mu_1(\norma{z}) )\mathrm{sign}(u-v) \\
&\qquad\qquad+ \lambda \Big[\int_0^1 (1-r)\big(\nabla^2 g(t,r y+(1-r)x,u) -\nabla^2 g(t,ry +(1-r)x, v)\big)\d{r}\cdot z^2\Big]\\
&\qquad\qquad\qquad\qquad\qquad\cdot \frac{z}{\norma{z}}\mu_1'(\norma{z})\,\mathrm{sign}(u-v) \Big\} \d{z}\d{x}\d{t}\,.
\end{align*}
When $\lambda$ goes to 0, we obtain by the Dominated Convergence Theorem
\begin{align*}
\lim_{\varepsilon,\eta, \theta, \lambda \to 0} J_x=&\int_0^T \int_{B(x_0,R+M(T_0-t))} \left(\nabla g(t,x,u(t,x))-\nabla g(t,x,v(t,x))\right)\, \mathrm{sign}(u-v)\d{x}\d{t} \\
&\qquad\qquad\qquad\cdot\int_{\reali^N}\nabla(z\mu_1(\norma{z}) )\d{z}\,.
\end{align*}
As $\int_{\reali^N}\nabla(z\mu_1(\norma{z}) )\d{z}=0$, we finally get
\begin{align}  
\label{eq:estJ} 
\lim_{\varepsilon,\eta, \theta, \lambda \to 0} J_x=&0\,.
\end{align}

\paragraph{Estimates of $J_t$ and $L_t$.}
For $J_t$ and $L_t$, we avoid now the use of the derivatives in time thanks to an application of  the Dominated Convergence Theorem. We obtain
\begin{align}
\lim_{\varepsilon,\eta, \theta, \lambda \to 0} J_t=\lim_{\varepsilon,\eta, \theta, \lambda \to 0} L_t=&0\,.\label{eq:est_Jt}
\end{align}

\paragraph{Estimate of $L_x$.}
For $L_x$, we have
\begin{align*}
 L_x\leq&\int_{\rpis} \int_{\reali^N} \int_{\rpis} \int_{\reali^N} \big[(F-G-\div(f-g))(t,y,v)+( F(t,y,u)-F(t,y,v))\\
&\qquad\qquad +\int_0^1 \nabla G (t, ry+(1-r)x,v)\cdot (y-x)\d{r}\big]\phi\d{y}\,\d{s}\,\d{x}\,\d{t}\,.
\end{align*}

Note that $F(t,y,u)-F(t,y,v)=\int_v^u \pt_u F(t,y,w) \d{w}$ vanishes for $y\in \reali^N\setminus \mathcal{S}_T(u,v)$. 
Consequently, with $\mathcal{V}=\sup_{t\in [0,T_0]}(\norma{u(t)}_{\L\infty(\reali^N)}, \norma{v(t)}_{\L\infty(\reali^N)})$ and  
\begin{equation}\label{KTUV}
 \Sigma_{T_0}^{u,v}=[0,T_0]\times \mathcal{S}_T(u,v) \times [-\mathcal{V},\mathcal{V}]\,,
\end{equation}
we obtain
\begin{align}
\lim_{\varepsilon, \eta, \theta,\lambda\to 0} L_x\leq &\int_0^T \int_{B(x_0, R+M(T_0-t))}\norma{(F-G)(t,x,\cdot) - \div(f-g)(t,x,\cdot) }_{\L\infty([-{U}_t,{U}_t])}\d{x}\d{t} \nonumber \\
&+\norma{\pt_u F}_{\L\infty( \Sigma_{T_0}^{u,v})}\int_0^T\int_{B(x_0,R+M(T_0-t))} \modulo{u(t,x)-v(t,x)}\d{x}\d{t}\,.\label{eq:est_Lx}
\end{align}

\paragraph{Estimate of $K$.}
  In order to estimate $K$ as given in~(\ref{eq:K}), we  follow the same procedure as in  \cite[Theorem 2.6]{ColomboMercierRosini}: let us introduce a
  regularisation of the $y$ dependent functions. In fact, let
  $\rho_\alpha(z) = \frac{1}{\alpha} \rho
  \left(\frac{z}{\alpha}\right)$ and $\sigma_\beta(y) =
  \frac{1}{\beta^N} \sigma \left(\frac{y}{\beta}\right)$, where $\rho
  \in \Cc\infty( \reali; \rpic)$ and $\sigma \in \Cc{\infty}(
  \reali^N; \rpic)$ are such that $\norma{\rho}_{\L1(\reali;\reali)} =
  \norma{\sigma}_{\L1(\reali^N;\reali)} = 1$ and $\mathrm{Supp} (\rho)
  \subseteq \left]-1,1\right[$, $\mathrm{Supp} (\sigma) \subseteq
  B(0,1)$. Then, we introduce
  \begin{displaymath}
    \begin{array}{rcl@{\qquad}rcl}
      P(w)
      & = & 
      (g-f)(t,y,w)\,,
      &
      s_\alpha
      & = &
      \mathrm{sign} \ast_u \rho_\alpha \,,
      \\
      \Upsilon_\alpha^i (w)
      & = &
      s_\alpha(w-v) \, \left( P_i(w) - P_i(v) \right) ,
      &
      u_\beta 
      & = & 
      \sigma_\beta \ast_y u \,,
      \\
      \Upsilon^i (w)
      & = &
      \mathrm{sign} (w-v) \, \left( P_i(w) - P_i(v) \right) ,
    \end{array}
  \end{displaymath}
  so that we obtain
  \begin{eqnarray*}
    & &
    \langle 
    \Upsilon_\alpha^i(u_{\beta}) - \Upsilon_\alpha^i(u), \, \pt_{y_i} \varphi 
    \rangle 
    \\
    & = &\!\!\int_{\reali^N} \left[\left( s_\alpha(u-v)P_i(u)-s_\alpha(u_\beta-v)P_i(u_\beta)\right)+\left( s_\alpha(u-v)-s_\alpha(u_\beta-v)\right) P_i(v) \right]\pt_{y_i}\phi \d{y}\\
&=& \!\!\int_{\reali^N}\int_u^{u_\beta} \left(\pt_U(s_\alpha(U-v)P_i(U))-\pt_U s_\alpha(U-v)P_i(v)\right)\pt_{y_i}\phi \d{y}\\
&=&\!\! \int_{\reali^N} \int_{u}^{u_\beta}\left( s_\alpha'(U-v)(P_i(U)-P_i(v))+s_\alpha(U-v)P_i'(U)\right) \pt_{y_i}\phi \d{y}\,.
  \end{eqnarray*}
  Now, we use the relation $s_\alpha'(U) = \frac{2}{\alpha} \rho
  \left( \frac{U}{\alpha} \right)$ to obtain
  \begin{eqnarray*}
    \modulo{
      \langle 
      \Upsilon_\alpha^i(u_{\beta}) - \Upsilon_\alpha^i(u), \,
      \pt_{y_i} \varphi 
      \rangle }
    & \leq & 
    \int_{\reali^N} \int_{\reali}{2}
      \rho \left( z \right) \modulo{P_i(v+\alpha z)-P_i(v)} \, \pt_{y_i} \varphi 
    \d{z}\,\mathrm{d}y
    \\
    & &
    +
    \int_{\reali^N} \int_{u}^{u_\beta} \modulo{P_i'(U)} \pt_{y_i} \varphi 
    \, \mathrm{d}U \, \mathrm{d}y \,.
  \end{eqnarray*}
  When $\alpha$ tends to $0$, using  the Dominated Convergence
  Theorem we obtain
  \begin{eqnarray*}
    \modulo{
      \langle 
      \Upsilon^i(u_{\beta})-\Upsilon^i(u), \pt_{y_i}\varphi
      \rangle 
    }
    & \leq &
    \int_{\reali^N} \modulo{u-u_\beta} \, \norma{P_i'}_{\L\infty}
    \pt_{y_i} \varphi \, \mathrm{d}y\,.
  \end{eqnarray*}
  Applying the Dominated Convergence Theorem again, we see that
  \begin{eqnarray*}
    \lim_{\beta \to 0} \; \lim_{\alpha \to 0}
    \langle \Upsilon_\alpha^i(u_{\beta}), \, \pt_{y_i} \varphi \rangle
    & = &
    \langle \Upsilon^i(u), \, \pt_{y_i} \varphi \rangle \,,
    \\
    \lim_{\beta \to 0} \; \lim_{\alpha \to 0}
    \langle \Upsilon_\alpha(u_{\beta}), \, \nabla_y\varphi \rangle 
    & = &
    \langle \Upsilon(u),\, \nabla_y\varphi \rangle \,.
  \end{eqnarray*}
  Consequently, it is sufficient to find a bound independent of
  $\alpha$ and $\beta$ on $K_{\alpha,\beta}$, where
  \begin{displaymath}
    K_{\alpha, \beta}
    =
    - \int_{\rpis} \int_{\reali^N} \int_{\rpis} \int_{\reali^N} 
    \Upsilon_\alpha(u_\beta) \cdot \nabla_y \varphi
    \, \mathrm{d}x \, \mathrm{d}t \, \mathrm{d}y \, \mathrm{d}s \,.
  \end{displaymath}
  Integrating by parts, we obtain
  \begin{align*}
    K_{\alpha,\beta}
     = &
    \int_{\rpis} \int_{\reali^N} \int_{\rpis} \int_{\reali^N} \!\!
    \Div_{y} 
    \Upsilon_\alpha (u_\beta) \,\varphi 
    \,\mathrm{d}x \, \mathrm{d}t \, \mathrm{d}y \, \mathrm{d}s
    \\
     = &
    \int_{\rpis} \!\! \int_{\reali^N}\!\! \int_{\rpis}\!\! \int_{\reali^N}  \!\!
    \pt_u s_\alpha(u_\beta-v)  \nabla u_\beta \cdot
    \left( (g-f) (t,y,u_\beta) - (g-f) (t,y,v) \right) \phi
    \, \mathrm{d}x \,\mathrm{d}t\,\mathrm{d}y\,\mathrm{d}s 
    \\
     &
    +
    \int_{\rpis}\!\! \int_{\reali^N}\!\! \int_{\rpis} \!\!\int_{\reali^N} \!\!
    s_\alpha (u_\beta-v) 
    \left( \pt_u (g-f) (t,y,u_\beta) \cdot \nabla u_\beta \right) \phi 
    \, \mathrm{d}x \, \mathrm{d}t \, \mathrm{d}y \, \mathrm{d}s
    \\
    &+ \int_{\rpis} \!\!\int_{\reali^N}\!\! \int_{\rpis} \!\!\int_{\reali^N} \!\!
    s_\alpha (u_\beta-v) 
    \left( \div (g-f) (t,y,u_\beta) -\div(g-f)(t,y,v) \right) \phi 
    \, \mathrm{d}x \, \mathrm{d}t \, \mathrm{d}y \, \mathrm{d}s
    \\
     = &
    K_1 + K_2 +K_3 \,.
  \end{align*}
  We now search for a bound for each term of the sum above.
  \begin{itemize}
  \item For $K_1$, recall that $\pt_u s_\alpha(u) = \frac{2}{\alpha}
    \rho \left( \frac{u}{\alpha} \right)$. Hence, by the Dominated
    Convergence Theorem, we get that $K_1\to 0$ when $\alpha\to
    0$. Indeed,
    \begin{eqnarray*}
      & &
      \modulo{\frac{2}{\alpha} \rho
        \left( \frac{u_\beta-v}{\alpha} \right) \, \nabla u_\beta 
        \cdot
        \left( (g-f)(t,y,u_\beta) - (g-f)(t,y,v) \right) \, \varphi} 
      \\
      & \leq &
      \frac{2}{\alpha} \rho\left( \frac{u_\beta-v}{\alpha} \right)
      \varphi
      \norma{\nabla u_\beta(s,y)}
      \int_{v}^{u_\beta} \norma{\pt_u(f-g)(t,y,w)}
      \, \mathrm{d}w
      \\
      & \leq & 
      2\norma{\rho}_{\L\infty(\reali;\reali)} \, \norma{\nabla u_\beta(s,y)} \,
      \norma{\pt_u(f-g)}_{\L\infty(\Omega_U^{T_0};\reali^N)} \, \varphi 
      \qquad 
      \in \L1 \left((\rpis\times\reali^N)^2;\reali\right).
    \end{eqnarray*}
  \item For $K_2$,  denoting $\mathcal{D}=\{\mathcal{S}_{T_0}(u)+B(0,\beta)\}\times[-\norma{u(t)}_{\L\infty},\norma{u(t)}_{\L\infty}]$, we get
    \begin{eqnarray*}
      K_2
      & \leq &
      \int_0^{T+\epsilon+\pd} \int_{\reali^N} \int_{\rpic} \norma{\pt_u(f-g)(t)}_{\L{\infty}( \mathcal{D}; \reali^N)}  
      \norma{\nabla u_\beta(s,y)} \nu(t-s)
      \, \mathrm{d}y \, \mathrm{d}s\d{t}
      \\
      & \leq &
      \int_0^{T+\varepsilon+\pd} \int_{\rpic} \norma{\pt_u(f-g)(t)}_{\L{\infty}( \mathcal{D}; \reali^N)} \, \tv(u_\beta(s))\, \nu(t-s)\d{s}\mathrm{d}t\,.
    \end{eqnarray*}
We note besides that $\mathcal{D}\to \mathcal{S}_{T_0}(u)\times[-U_t,U_t]$ when $\beta\to 0$.
    \item For $K_3$, we have
    \[
\lim_{\alpha,\beta,\epsilon, \pd, \gd \to 0}K_3\leq     \int_{0}^T \int_{B(x_0, R+M(T_0-t))}\norma{\pt_u \div(g-f)}_{\L\infty(\Sigma_{T_0}^{u,v})} \modulo{(u-v)(t,x)}\d{x}\d{t}\,.
   \]
  \end{itemize}
  Finally, letting $\alpha,\beta\to 0$ and $\epsilon, \pd, \gd \to 0$,
  due to \cite[Proposition~3.7]{AmbrosioFuscoPallara}, we obtain
  \begin{equation}
    \label{eq:estK}
    \begin{array}{rcl}
      \displaystyle   
      \limsup_{\epsilon, \pd, \gd \to 0}  K 
      & \leq &
      \displaystyle 
    \int_0^T    \norma{\pt_u(f-g)(t)}_{\L{\infty}( \mathcal{S}_{T_0}(u)\times[-U_t,U_t] ;\reali^N)} \tv(u(t))\, \mathrm{d}t \\
    && \displaystyle  +\int_{0}^T \int_{B(x_0, R+M(T_0-t))}\norma{\pt_u \div(g-f)}_{\L\infty(\Sigma_{T_0}^{u,v})} \modulo{(u-v)(t,x)}\d{x}\d{t}\,.
    \end{array}
  \end{equation}

\paragraph{Collecting of the estimates.}
  Now, we collate the estimates obtained in~(\ref{eq:estI}),
  (\ref{eq:estJ}), (\ref{eq:est_Lx}), 
  and (\ref{eq:estK}). Remark the order in which we pass to the
  various limits: first $\epsilon,\pd, \theta\to 0$ and, after, $\gd
  \to 0$. Therefore, we get
  \begin{eqnarray*}
    & &
    \int_{B(x_0,R+M(T_0-T))} \modulo{u(T,x)-v(T,x)} \,
    \mathrm{d}x
    \\
    & \leq & 
    \int_{B(x_0, R +MT_0)} \modulo{u(0,x)-v(0,x)}
    \,\mathrm{d}x
    \\
    & &
    +(\norma{\pt_u F}_{\L\infty( \Sigma_{T_0}^{u,v})}+\norma{\pt_u \div(g-f)}_{\L\infty(\Sigma_{T_0}^{u,v})})
    \int_0^{T} \!\! \int_{B(x_0,R+M(T_0-t))} 
    \modulo{v(t,x)-u(t,x)}
    \, \mathrm{d}x \, \mathrm{d}t
    \\
    & &
    +  \biggl[
    \int_0^T  \norma{\pt_u(f-g)(t)}_{\L{\infty}( \mathcal{S}_{T_0}(u)\times[-U_t,U_t] )} \tv(u(t))\,  \mathrm{d}t
    \\
    & &
    \qquad
    +
    \int_0^T \int_{B(x_0,R+M(T_0-t))}\!
    \norma{\left((F-G)-\div(f-g)\right)(t, y, \cdot)}_{\L{\infty}([-{V}_t, {V}_t])}
    \mathrm{d}y\, \mathrm{d}t
    \biggr]\,.
  \end{eqnarray*}
\end{proofof}  

\begin{remark}
In the preceding proof, the main changes comparing to \cite{ColomboMercierRosini} are essentially in the bound of $J_x$.  Furthermore, we also gain some regularity hypotheses by avoiding the use of the derivative in time.
\end{remark}

  \begin{proofof}{Theorem \ref{teo:estimates}}
Thanks to Lemma \ref{lem:main}, we can write 
  \begin{equation}\label{eq:aaa}
    A'(T) \leq A'(0) + \kappa^* \, A(T) + R(T) \,,
  \end{equation}
  where
  \begin{eqnarray}
    \nonumber
    A(T)
    & = &
    \int_0^{T} \int_{B(x_0, R +M(T_0-t))}
    \modulo{v(t,x)-u(t,x)}
    \, \mathrm{d}x \, \mathrm{d}t\, ,
    \\
    \label{eq:table}
    \kappa^*
    & = &
    \norma{\pt_u F }_{\L{\infty}( \Sigma_{T_0}^{u,v})}+\norma{\pt_u \div(g-f)}_{\L\infty(\Sigma_{T_0}^{u,v})}\,,
    \\
    \nonumber
    R(T)
    & = &
    \norma{\pt_u(f-g)}_{\L{\infty}( \Sigma_{T_0}^{u})}
    \int_0^T \tv\left(u(t) \right)\, \mathrm{d}t
    \\
 \nonumber   & &
    \quad
    +
    \int_0^T \int_{B(x_0,R+M(T_0-t))}\!
    \norma{\left((F-G)-\div(f-g)\right)(t,y,\cdot)}_{\L{\infty}([-{V}_t,{V}_t])}
    \!
    \mathrm{d}y \, \mathrm{d}t .
  \end{eqnarray}
  The bound~(\ref{result}) on $\tv\left(u(t)\right)$ gives:
  \begin{eqnarray*}
    R(T)
    & \leq & 
    \frac{e^{\kappa^*_0 T} -1}{\kappa^*_0} a
    +
    \int_0^T \frac{e^{\kappa^*_0 (T-t)} -1}{\kappa^*_0} b(t)\mathrm{d}t 
    +
    \int_0^T c(t)\mathrm{d}t\,,
  \end{eqnarray*}
  where $\kappa^*_0$ is defined in~(\ref{eq:kappao}) and
  \begin{eqnarray*}
    a
    & = &
    \norma{\pt_u(f-g)}_{\L{\infty}( \Sigma_{T_0}^{u} )}\tv(u_0) \,,
    \\
    b(t)
    & = & 
    N W_N \norma{\pt_u(f-g)}_{\L{\infty}( \Sigma_{T_0}^{u})} 
    \int_{\reali^N}
    \norma{\nabla(F-\div  f)(t,x,\cdot)}_{\L\infty([-{U}_t,{U}_t])}
    \, \mathrm{d}x \,,
    \\
    c(t)
    & = &
    \int_{B(x_0,R+M(T_0-t))}
    \norma{\left((F-G)-\div(f-g)\right)(t, y,\cdot)}_{\L{\infty}([-{V}_t,{V}_t])}
    \,\mathrm{d}y  \,,
  \end{eqnarray*}
  since $T \leq T_0$. Consequently
  \begin{equation}
    \label{aab}
    A'(T)
    \leq
    A'(0)
    +
    \kappa^* A(T)
    + 
    \left(
      \frac{e^{\kappa^*_0 T} - 1}{\kappa^*_0} a
      +
      \int_0^T \frac{e^{\kappa^*_0 (T-t)} -1}{\kappa^*_0} b(t)\mathrm{d}t 
      +
      \int_0^T c(t)\mathrm{d}t \right)
    \,.
  \end{equation}
  By a Gronwall type argument, we obtain
  \begin{align*}
    A'(T)
     \leq & 
    e^{\kappa^* T} A'(0)
    +
    \frac{e^{\kappa^*_0 T}-e^{\kappa^* T}}{\kappa^*_0-\kappa^*} \, a
    +
    \int_0^T \frac{e^{\kappa^*_0 (T-t)}-e^{\kappa^* (T-t)}}{\kappa^*_0-\kappa^*}\, b(t)
    \, \mathrm{d}t 
    +
    \int_0^T e^{\kappa^*(T-t)}c(t)\, \mathrm{d}t\, .
  \end{align*}
  Taking $T=T_0$, we finally obtain the result.
\end{proofof}

\begin{proofof}{Proposition \ref{thm:stab2}}
Thanks to Lemma \ref{lem:main}, we can write 
  \begin{equation}\label{eq:aaa2}
    B'(T) \leq B'(0) + \kappa^* \, B(T) + S(T) \,,
  \end{equation}
  where
  \begin{eqnarray}
    \nonumber
    B(T)
    & = &
    \int_0^{T} \int_{B(x_0, R +M(T_0-t))}
    \modulo{v(t,x)-u(t,x)}
    \, \mathrm{d}x \, \mathrm{d}t\, ,
    \\
    \label{eq:table2}
    \kappa^*
    & = &
    \norma{\pt_u F }_{\L{\infty}( \Sigma_{T_0}^{u,v})}+\norma{\pt_u \div(g-f)}_{\L\infty(\Sigma_{T_0}^{u,v})}\,,
    \\
    \nonumber
    S(T)
    & = &
    \sup_{t\in [0,T_0]} \,\tv\left(u(t) \right)\,
    \int_0^T \norma{\pt_u(f-g)(t)}_{\L{\infty}(\mathcal{S}_{T_0}(u)\times[-U_t,U_t]  )}\, \mathrm{d}t
    \\
 \nonumber   & &
    \quad
    +
    \int_0^T \int_{B(x_0,R+M(T_0-t))}\!
    \norma{\left((F-G)-\div(f-g)\right)(t,y,\cdot)}_{\L{\infty}([-{V}_t,{V}_t])}
    \!
    \mathrm{d}y \, \mathrm{d}t .
  \end{eqnarray}
  The bound~(\ref{result}) on $\tv\left(u(t)\right)$ gives:
  \begin{eqnarray*}
    S(T)
    & \leq & 
    \left(e^{\kappa_0^* T}\tv(u_0)+NW_N  \int_0^T e^{\kappa_0^*(T-t)}\int_{\reali^N}
    \norma{\nabla(F-\div  f)(t,x,\cdot)}_{\L\infty([-{U}_t,{U}_t])}
    \, \mathrm{d}x\d{t} \right)\\
    &&\qquad \times\int_0^T \norma{\pt_u(f-g)(t)}_{\L{\infty}(\mathcal{S}_{T_0}(u)\times[-U_t,U_t]  )}\, \mathrm{d}t\\
    &&+\int_0^T \int_{B(x_0,R+M(T_0-t))}
    \norma{\left((F-G)-\div(f-g)\right)(t, y,\cdot)}_{\L{\infty}([-{V}_t,{V}_t])}
    \,\mathrm{d}y  \,,\d{t}
  \end{eqnarray*}
  where $\kappa^*_0$ is defined in~(\ref{eq:kappao}). 
Let us denote
\begin{align*}
a=& \tv(u_0)\,,\\
b(t)=& NW_N   e^{-\kappa_0^*t}\int_{\reali^N}
    \norma{\nabla(F-\div  f)(t,x,\cdot)}_{\L\infty([-{U}_t,{U}_t])}
    \, \mathrm{d}x\,,\\
    c(t)=&\norma{\pt_u(f-g)(t)}_{\L{\infty}(\mathcal{S}_{T_0}(u)\times[-U_t,U_t]  )}\,,\\
    d(t)=& \int_{B(x_0,R+M(T_0-t))}
    \norma{\left((F-G)-\div(f-g)\right)(t, y,\cdot)}_{\L{\infty}([-{V}_t,{V}_t])}
    \,\mathrm{d}y \,.
\end{align*}  
Then we have 
\[
A'(T)\leq A'(0)+\kappa^* A(T)+e^{\kappa_0^* T}\left( a+\int_0^T b(t)\d{t}\right)\int_0^T c(t)\d{t}+\int_0^T d(t)\d{t}\,.
\]
  Consequently, by a Gronwall type argument, we obtain
  \begin{align*}
    B'(T)
     \leq & 
    e^{\kappa^* T} B'(0)
    +\frac{\kappa_0^*e^{\kappa_0^* T}-\kappa^*e^{\kappa^* T}}{\kappa_0^*-\kappa^*} \left(a+\int_0^T b(t)\d{t}\right)\int_0^T c(t)\d{t} +e^{\kappa^* T}\int_0^T d(t)\d{t}\,.
  \end{align*}
  Taking $T=T_0$, we finally obtain the result.
\end{proofof}

\section{Technical tools}\label{sec:tech}
We give below a lemma that was used in the previous proof. 
Let us recall from \cite{ColomboMercierRosini} the following useful technical results:
\begin{lemma}

  \label{lem:mu}
  Fix a function $\mu_1 \in \Cc\infty(\rpic;\rpic)$ with
  \begin{equation}
    \label{eq:mu}
    \mathrm{Supp}(\mu_1) \subseteq \left[0, 1 \right[
    ,\quad
    \int_{\rpis}
    r^{N-1} \mu_1(r) \, \mathrm{d}r = \frac{1}{N \omega_N}
    ,\quad
    \mu_1' \leq 0
    ,\quad
    \mu_1^{(n)}(0) = 0 \mbox{ for } n\geq 1.
  \end{equation}
  Define
  \begin{equation}
    \label{eq:Mu}
    \mu(x) = \frac{1}{\gd^N} \, \mu_1 \left(
      \frac{\norma{x}}{\gd} \right) \,.
  \end{equation}
  Then, recalling that $\omega_0 = 1$,
  \begin{eqnarray}
    \label{eq:mu1}
    \int_{\reali^N} \mu(x) \,\mathrm{d} x
    & = &
    1 \, ,
    \\
    \label{eq:mu2}
    \int_{\reali^N}
    \modulo{x_1} \, \mu_1 \left( \norma{x}\right) \, \mathrm{d}x
    & = &
    \frac{2}{N} \, \frac{\omega_{N-1}}{\omega_N} \,
    \int_{\reali^N}
    \norma{x} \, \mu_1 \left( \norma{x}\right) \, \mathrm{d}x\, ,
    \\
    \label{eq:mu3}
    \int_{\reali^N}
    \norma{x} \, \norma{\nabla \mu (x)} \, \mathrm{d}x
    & = &
    - \int_{\reali^N}
    \norma{x} \, \mu_1' \left( \norma{x}\right) \, \mathrm{d}x
    \;\, = \;\,
    N\,  ,
    \\
    \label{eq:mu4}
    \int_{\reali^N}
    \norma{x}^2 \, \mu_1' \left( \norma{x} \right) \, \mathrm{d}x
    & = &
    - (N+1) \,
    \int_{\reali^N}
    \norma{x} \, \mu_1 \left( \norma{x}\right) \, \mathrm{d}x \,.
  \end{eqnarray}
\end{lemma}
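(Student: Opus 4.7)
The plan is to establish each of the four identities by a direct spherical-coordinates computation, exploiting the normalization in \Ref{eq:mu} and an integration by parts whenever $\mu_1'$ appears. All four formulas are routine: there is no new idea, only bookkeeping.

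For \Ref{eq:mu1}, the first step is to rescale by $x=\gd y$ to eliminate $\gd$, then pass to polar coordinates so that $\int_{\reali^N}\mu_1(\norma{y})\,\mathrm{d}y = N\omega_N\int_0^{\infty}r^{N-1}\mu_1(r)\,\mathrm{d}r$; the second condition in \Ref{eq:mu} makes this equal to $1$. For \Ref{eq:mu3} and \Ref{eq:mu4}, the same rescaling together with the identity $\nabla\mu(x)=\gd^{-(N+1)}\mu_1'(\norma{x}/\gd)\,x/\norma{x}$ and the sign condition $\mu_1'\leq 0$ reduces the question to computing $\int_0^{\infty}r^k\mu_1'(r)\,\mathrm{d}r$ for $k=N$ and $k=N+1$ respectively; one integration by parts brings each back to $\int_0^{\infty}r^{N-1}\mu_1(r)\,\mathrm{d}r=1/(N\omega_N)$, respectively to $\int_0^{\infty}r^N\mu_1(r)\,\mathrm{d}r$, which immediately gives the announced values $N$ and $-(N+1)\int_{\reali^N}\norma{x}\,\mu_1(\norma{x})\,\mathrm{d}x$. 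The boundary terms at infinity vanish because $\mu_1$ is compactly supported, and those at $0$ vanish thanks to the condition $\mu_1^{(n)}(0)=0$ for $n\geq 1$.

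The only identity requiring a geometric input is \Ref{eq:mu2}, which is equivalent to the surface-sphere relation
\[
\int_{S^{N-1}}\modulo{\theta_1}\,\mathrm{d}\sigma(\theta)=2\,\omega_{N-1}.
\]
I would obtain it by slicing the sphere: writing $\theta=(\cos\phi,\sin\phi\,\omega)$ with $\phi\in[0,\pi]$ and $\omega\in S^{N-2}$, the measure factors as $\mathrm{d}\sigma=\sin^{N-2}\phi\,\mathrm{d}\phi\,\mathrm{d}\sigma'(\omega)$, so the integral becomes $(N-1)\omega_{N-1}\int_0^\pi\modulo{\cos\phi}\sin^{N-2}\phi\,\mathrm{d}\phi$, and the $\phi$-integral is $2/(N-1)$ by the substitution $u=\sin\phi$. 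Passing to polar coordinates on both sides of \Ref{eq:mu2} and cancelling the common radial factor $\int_0^{\infty}r^N\mu_1(r)\,\mathrm{d}r$ yields the desired ratio $2\omega_{N-1}/(N\omega_N)$.

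The only (mild) pitfall is keeping the three normalization constants straight: $\omega_N$ itself, the surface area $N\omega_N$ of $S^{N-1}$, and the surface area $(N-1)\omega_{N-1}$ of $S^{N-2}$, together with checking that in each integration by parts the boundary contributions at $0$ and at infinity vanish. I do not expect any real obstacle, since the hypotheses in \Ref{eq:mu} are chosen precisely so that every calculation goes through.
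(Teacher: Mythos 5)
Your computations are correct: \Ref{eq:mu1}, \Ref{eq:mu3} and \Ref{eq:mu4} follow exactly as you say from the rescaling $x=\gd y$, polar coordinates with $|S^{N-1}|=N\omega_N$, and one integration by parts whose boundary terms vanish (at the origin simply because $r^N\mu_1(r)\to 0$, the hypothesis $\mu_1^{(n)}(0)=0$ being there only to make $\mu$ smooth), while your slicing of the sphere gives the key identity $\int_{S^{N-1}}\modulo{\theta_1}\,\mathrm{d}\sigma=2\omega_{N-1}$ needed for \Ref{eq:mu2} (for $N\geq 2$; the case $N=1$ is immediate). The paper itself states this lemma without proof, recalling it from the reference of Colombo, Mercier and Rosini, and your argument is the standard one that any such proof would follow, consistent with the Wallis-integral relation \Ref{eq:W} since $\frac{2}{N}\frac{\omega_{N-1}}{\omega_N}=\frac{1}{NW_N}$.
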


\begin{lemma}
  \label{lem:estI}
  Let $I$ be defined as in~(\ref{eq:I}). Then,
  \begin{eqnarray*}
    \limsup_{\epsilon\to0} I 
    & \leq &
    \int_{\norma{x-x_0} \le R+MT_0+\theta}
    \modulo{u(0,x)-v(0,x)} \, \mathrm{d}x
    \\
    & &
    -
    \int_{\norma{x-x_0}\le R +M(T_0-T)} 
    \modulo{u(T,x)-v(T,x)} \, \mathrm{d}x 
    + 
    2 \sup_{\tau\in\{0,T\}} \tv \left(u(\tau) \right) \gd
    \\
    & &      
    +
    2 \sup_{t\in \{0,T\} \atop s\in\left]t,t+\pd\right[}
    \int_{\norma{y-x_0}\le R +\gd+M(T_0-t)+\theta}
    \modulo{u(t,y)-u(s,y)} \, \mathrm{d}y \,.
  \end{eqnarray*}
\end{lemma}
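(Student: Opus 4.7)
The plan is to first pass to the limit $\epsilon \to 0$ so that $\chi'$ collapses the $t$-integration to the two boundary times $t=0$ and $t=T$, and then insert synchronized time--space arguments via two triangle inequalities, controlling the residues by spatial BV regularity and by one-sided continuity in time.

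\textbf{Step 1 (limit in $\epsilon$).} Recall $\chi'(t) = Y'_\epsilon(t) - Y'_\epsilon(t-T)$ where $Y'_\epsilon \geq 0$, $\int Y'_\epsilon = 1$, and $\spt Y'_\epsilon \subset\,]0,\epsilon[$, so $\chi'$ is an approximate unit concentrated immediately to the \emph{right} of $t=0$ (resp.\ $t=T$). By right-continuity of the Kru\v zkov solution $v(\cdot, x)$ in $\Lloc1$ (Theorem~\ref{teo:Kruzkov}) and the smoothness of $\psi$, $\Psi$ in $t$, I pass to the limit and obtain
\[
\lim_{\epsilon\to 0} I \;=\; I_0 - I_T,
\]
where for $\tau \in \{0, T\}$
\[
I_\tau
=
\int_{\rpis}\!\!\int_{\reali^N}\!\!\int_{\reali^N}
\modulo{u(s,y)-v(\tau,x)}\,\psi(\tau,x)\,\nu(\tau-s)\,\mu(x-y)
\,\mathrm{d}x\,\mathrm{d}y\,\mathrm{d}s.
\]

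\textbf{Step 2 (triangle inequality splittings).} To bring $u$ and $v$ to the same time-space points, I use in $I_0$
\[
\modulo{u(s,y)-v(0,x)}
\leq
\modulo{u(0,x)-v(0,x)} + \modulo{u(0,y)-u(0,x)} + \modulo{u(s,y)-u(0,y)}
\]
and in $-I_T$ the reversed inequality
\[
-\modulo{u(s,y)-v(T,x)}
\leq
-\modulo{u(T,x)-v(T,x)} + \modulo{u(T,y)-u(T,x)} + \modulo{u(s,y)-u(T,y)}\,.
\]

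\textbf{Step 3 (bookkeeping on the six resulting pieces).} For the principal $\modulo{u(\tau,x)-v(\tau,x)}$ terms ($\tau\in\{0,T\}$), the factors $\int\nu = \int\mu = 1$ absorb the $s$ and $y$ integrations, and the sandwich $\mathbf{1}_{B(x_0, R+M(T_0-\tau))} \leq \psi(\tau,\cdot) \leq \mathbf{1}_{B(x_0, R+M(T_0-\tau)+\theta)}$ yields the first two summands of the claim with the correct signs. For each spatial-slip term $\modulo{u(\tau,y)-u(\tau,x)}$ integrated against $\mu(x-y)$, property~(\ref{eq:afp}) gives $\tv(u(\tau))\int\norma{z}\mu(z)\,\mathrm{d}z \leq \gd\,\tv(u(\tau))$ since $\spt\mu_1\subset[0,1[$; summing over $\tau\in\{0,T\}$ produces the $2\sup_\tau \tv(u(\tau))\,\gd$ contribution. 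For each time-slip term $\modulo{u(s,y)-u(\tau,y)}$, the support of $\nu(\tau-\cdot)$ forces $s\in\,]\tau, \tau+\pd[$ (since $\spt\nu_1 \subset\,]-1,0[$), and the joint support of $\mu(x-y)$ and $\psi(\tau,\cdot)$ confines $\norma{y-x_0}\leq R+\gd+M(T_0-\tau)+\theta$; pulling the supremum in $s$ outside and summing over $\tau\in\{0,T\}$ yields the last summand.

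\textbf{Main obstacle.} The only delicate step is the passage to the limit in $\epsilon$: one must use crucially that $Y'_\epsilon$ is supported to the \emph{right} of $0$ and $T$, which matches precisely the one-sided $\Lloc1$-continuity guaranteed by Kru\v zkov's theorem. All subsequent work reduces to triangle-inequality bookkeeping together with the elementary mollifier bound $\int\norma{z}\mu(z)\,\mathrm{d}z\leq\gd$.
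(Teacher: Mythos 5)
Your proof is correct and follows exactly the route the paper relies on (it defers the proof to \cite[Lemma~5.2]{ColomboMercierRosini}): collapse the $t$-integration via the one-sided approximate identity $\chi'$ using the right-continuity of the Kru\v zkov solution, then split $\modulo{u(s,y)-v(\tau,x)}$ by the two triangle inequalities and bound the spatial slip by~(\ref{eq:afp}) and the time slip by the supremum over $s\in\left]\tau,\tau+\pd\right[$. The four terms of the lemma emerge precisely as in your bookkeeping, so nothing further is needed.
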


\begin{proof}
See \cite[Lemma 5.2]{ColomboMercierRosini}.
\end{proof}

\small{

 \bibliography{ref}

 \bibliographystyle{abbrv} }

\end{document}